\title[On Unique Ergodicity in Stochastic PDEs]{\Large On Unique Ergodicity in Nonlinear Stochastic Partial Differential Equations}
\author[Nathan Glatt-Holtz, Jonathan C. Mattingly, Geordie Richards]{Nathan Glatt-Holtz, Jonathan C. Mattingly, Geordie Richards\\
  \scriptsize{emails: negh@math.vt.edu, jonm@math.duke.edu,
    g.richards@rochester.edu}}
\dedicatory{Dedicated to David Ruelle and Yakov G. Sinai on the
  occasion of their 80th birthdays with thanks for all they have and will
teach us.}
\date{}
\numberwithin{equation}{section}
\newtheorem{theorem}{Theorem}[section]
\newtheorem{lemma}{Lemma}[section]
\newtheorem{proposition}{Proposition}[section]
\newtheorem{corollary}{Corollary}[section]
\newtheorem{remark}{Remark}[section]
\renewcommand{\H}{H}
\newcommand{\pd}[1]{\partial_{#1}}
\newcommand{\indFn}[1]{1 \! \! 1_{#1}}
\newcommand{\one}[1]{\indFn{#1}}
\newcommand{\E}{\mathbb{E}}
\newcommand{\Prb}{\mathbb{P}}
\newcommand{\TT}{\mathbb{T}}
\newcommand{\DD}{\mathcal{D}}
\newcommand{\spH}{H}
\newcommand{\spV}{V}
\newcommand{\RR}{\mathbb{R}}
\newcommand{\N}{\mathbf{N}}
\newcommand{\bfU}{\mathbf{u}}
\newcommand{\tbfU}{\tilde{\mathbf{u}}}
\newcommand{\bfUe}{\mathbf{u}^\epsilon}
\newcommand{\bfV}{\mathbf{v}}
\newcommand{\pres}{\pi}
\newcommand{\tpres}{\tilde{\pi}}
\newcommand{\dpres}{q}
\newcommand{\qe}{\xi^\epsilon}
\newcommand{\Vort}{\xi}
\newcommand{\vshft}{r}
\newcommand{\yshft}{y}
\newcommand{\wdiag}{w}
\newcommand{\spcdot}{\,\cdot\,}
\begin{document}

\begingroup
\def\uppercasenonmath#1{} % this disables uppercasing title
\let\MakeUppercase\relax % this disables uppercasing authors
\maketitle
\endgroup

\begin{abstract}
  We illustrate how the notion of asymptotic coupling provides a
  flexible and intuitive framework for proving the
  uniqueness of invariant measures for a variety of stochastic partial differential equations whose deterministic
  counterpart possesses a finite number of determining modes.   Examples exhibiting parabolic and hyperbolic structure
  are studied in detail.  In the later situation we also present a simple framework for establishing the existence of invariant measures
  when the usual approach relying on the Krylov-Bogolyubov procedure and compactness fails.
\end{abstract}

\section{Introduction}

The goal of this work is to give a simple exposition, distillation and
refinement of methods developed over the last decade and a half
to analyze ergodicity in nonlinear stochastic PDEs with an additive
forcing.
To this end, we detail a number of examples which highlight different
difficulties and help clarify the domain of applicability and
flexibility of the core ideas.
For each example, we provide a simple proof of unique
ergodicity with a presentation which should be adaptable to other settings.
Though our calculations often lay the ground work for stronger results
such as
convergence of transition measures, exponential mixing or
spectral gaps, we resist the urge to expand the discussions here, and opt to make the uniqueness
arguments as simple as possible.  Although some of our examples are close
to those in the existing literature, many are not and require an involved
analysis to develop the required PDE estimates.  For all the situations
considered we present a relatively succinct proof of unique ergodicity,
particularly when compared with existing expositions.

The feature common to all of our examples is the existence of a finite
number of determining modes in the spirit of  \cite{FoaisProdi67} and a sufficiently
rich stochastic forcing structure to ensure that all determining modes
are directly excited.
There has been a larger body of work in these directions in recent years
beginning with \cite{BricmontKupiainenLefevere2001, EMattinglySinai2001,KuksinShirikyan1}
and continuing with \cite{Mattingly2002,KuksinShirikyan2, Mattingly2003,Hairer02,ELiu2002,
HairerMattingly06,HairerMattingly2008,DebusscheOdasso2008,HairerMattingly2011,
HairerMattinglyScheutzow2011, KuksinShirikian12,FoldesGlattHoltzRichardsThomann2013} to name a few.
Very roughly speaking, the presence of noise terms allows for the `coupling' of all
the relevant large scales of motion, which contain any unstable
directions.  The small scales, which are then provably stable,
contract asymptotically in time.

The heart of the calculations presented below are very much in the
spirit of \cite{EMattinglySinai2001,Mattingly2003,BakhtinMattingly2005} although
the approach here does not pass through a reduction to an equation
with memory. In that sense our presentation is closer to
\cite{Mattingly2002,Mattingly2003} which decomposes the future
starting from an initial condition and proves a coupling along a
subset of futures of positive probability.  There however, the
analysis was complicated by an attempt at generality and the
desire to prove exponential convergence. In \cite{Hairer02}, which
followed \cite{Mattingly2002}, the control used to produce the
coupling drove all of the modes together only asymptotically.\footnote{\cite{Hairer02} also coined the term `asymptotic
 coupling' which was later defined more generally in \cite{HairerMattingly2011}.} In
particular, \cite{Hairer02} did
 not force the large scales (``low modes'') to match exactly as was the case in
previous works. While this leads to slightly weaker results, it
can be conceptually simpler in some settings. In parallel to
these works,  two other groups developed their own
takes on these same questions. One vein of work is contained in
\cite{BricmontKupiainenLefevere2001,BricmontKupiainenLefevere02} and
the other beginning in \cite{KuksinShirikyan1,KuksinShirikyan2}
is nicely summarized in \cite{KuksinShirikian12}.

We proceed through the lens of a variation on the `asymptotic coupling' framework from
\cite{HairerMattinglyScheutzow2011} (and equally in the spirit of \cite{EMattinglySinai2001, Hairer02,Mattingly2003}).
This formalism allows us to highlight the underlying flexibility and the wide range applicability
of the above mentioned body of work by treating a
number of interesting systems simply and without extraneous
complications dictated by previous abstract frameworks.
Indeed, the examples selected below were chosen to underline a variety of commonly encountered
difficulties which can be surmounted, including the lack of exponential moments of
critical norms, the lack of well-posedness in the space where the
convergence analysis is performed, or, as in the case of weakly damped hyperbolic
systems, situations in which the dissipative mechanism is uniform across
(spatial) scales.

Our first example is the most classical: the 2-D Navier-Stokes
equations (NDEs) posed on a domain.  Here the presence of boundaries prevents
a closed vorticity-formulation and hence interferes with higher order
constraints of motion resulting in a `critical' problem.  This
criticality makes attaining the gradient bounds on the
Markov-semigroup difficult. Such bounds are central to the
infinitesimal approach of Asymptotic Strong Feller developed in
\cite{HairerMattingly06,HairerMattingly2011} to address the
hypoelliptic setting. As currently presented, the Asymptotic Strong Feller approach
does not localize easily. However the analysis presented in
\cite{EMattinglySinai2001,Mattingly2002,Mattingly2003} was localized
from the start, though the estimates used there do not apply to this
setting directly due to the lack of a vorticity formulation. Our presentation
is simplified in comparison to previous works, producing an
immediate and transparent proof of unique ergodicity.

We then turn to address two other interesting dissipative equations
arising from fluid systems which have received much less attention in
the SPDE literature.  The first example is provided by the so-called
hydrostatic Navier-Stokes (or simplified
Primitive Equations) arising for fluids spanning geophysical scales
and therefore of interest in climate and weather applications. See
\cite{PetcuTemamZiane2008} and Section~\ref{sec:SPE} for extensive
further references.  Our second example provides a streamlined analysis of the
so-called fractionally dissipative
stochastic Euler equation, introduced recently in
\cite{ConstantinGlattHoltzVicol2013}.  The theme shared by these
examples is that the non-linearity is relatively stronger than the
dissipative structure in comparison to the 2D NSEs.  This leads to a
situation in which the continuous dependence of solutions, and hence
the Foias-Prodi estimate, is tractable only when carried out in a
weaker topology.  As made explicit in Corollary~\ref{cor:precise:app} (see below),
performing this convergence analysis in a weaker topology is
sufficient to provide a suitable asymptotic coupling.

The last two examples are illustrative of the difficulties encountered in studying weakly damped
hyperbolic systems.  The first equation is a variation of the damped Euler-Voigt equation,
a hyperbolic regularization of the Euler equations.   As a second example we address the weakly damped stochastic
Sine-Gordon equations.     Here rather than using the parabolic
structure to produce an effective large damping at small scales,
higher-order regularity constraints restrict the strength of transfer to high frequencies and allow us to obtain
stronger control on the non-linear terms.

Regarding the Euler-Voigt equation, it is notable that the existence of solutions
also requires special consideration.    For this stage in the analysis, the
lack of any obvious finite time smoothing mechanism or alternatively of any invariant quantities in spaces more regular
than those for which the equations are well posed suggest that the usual approach relying on the
Krylov-Bogolyubov procedure and compactness may fail.  To address this difficult we show how a limiting procedure involving
a parabolic regularization can be used to guarantee the existence of stationary states.    This stage of the analysis
makes use of another abstract criteria which we think will prove useful in other future application in hyperbolic
SPDEs.

Note that in all of these examples we will focus our attention exclusively on the ``effectively elliptic''
setting where all of the ``determining modes'', or ``presumptively
unstable directions'', are directly forced stochastically.  The
hypoelliptic setting, where most of the determining modes are not
directly forced and the drift is used to spread the noise to all of
the determining modes, remains unexplored for all of the examples we
discuss and will almost certainly require significantly more
machinery, though much is provided by
\cite{HairerMattingly06,HairerMattingly2011}.
We emphasize that while we may extend our analysis in many of the examples
to obtain convergence rates or possibly spectral gaps,
we only explicitly address the question of unique ergodicity here in order to keep the exposition minimal and
to maintain the broadest range of applicability. That being said, the
analysis here lays the framework for obtaining convergence rates using
the ideas outlined in \cite{Mattingly2002,Hairer02,Mattingly2003}.

The manuscript is organized as follows. In Section~\ref{sec:abs:frame}, we recall the asymptotic coupling
framework and introduce several refinements which were not made explicit in previous work.  We conclude
this section by recalling a specific form the Girsanov theorem which will be crucially used to apply the
abstract asymptotic coupling in each of the examples and give a
general recipe for building asymptotic couplings.  In Section~\ref{sec:examples}, we successively
treat each of the SPDE examples described, proving the existence and uniqueness of an ergodic invariant measure
in each case.   The appendix is devoted to proving some abstract lemmata which
we use to prove the existence of an invariant measure for Euler-Voigt system consider in Section~\ref{sec:e:voigt}.

\section{An Abstract framework for Unique ergodicity}
\label{sec:abs:frame}

We now introduce the simple abstract framework which we use for proving unique ergodicity.
After briefly recalling some generalities about Markov processes and ergodic theory
we  present a refinement of the asymptotic coupling arguments developed
in \cite{HairerMattinglyScheutzow2011}.
The approach is very much in the spirit of the general results given in
this previous work but the packaging here is a
little different. We emphasize the exact formulation we will use and
make explicit the ability to establish convergence in a different topology than the one
associated to the space on which the Markov dynamics are
defined (see Corollary~\ref{cor:precise:app} and  Remark~\ref{rm:howWeUseIt:alt} below).
Furthermore, to better illuminate the connection
of these ideas to those in \cite{KomorowskiPeszatSzarek2010}, we weaken the condition for convergence at
infinity to one only involving time averages (even if we will not make direct use of
this generalization here). The proof given is essentially the same one
given in \cite{HairerMattinglyScheutzow2011} which in turn closely
mirrors the proofs in \cite{EMattinglySinai2001} and \cite{Mattingly2003}.

In the following section, we also recall a form of the Girsanov theorem which is crucially
used to apply our abstract results.  In particular this formulation is convenient for establishing
the absolute continuity on path space required by the abstract
framework. In the final section, we giving a general recipe for
building an asymptotic coupling leveraging the discussions of the
preceding two sections.

\subsection{Generalities}
\label{sec:gen:erg}

Let $P$ be a family of  Markov transition kernels on
a Polish space $\H$ with a metric  $\rho$. Given a bounded, measurable function
$\phi\colon \H \rightarrow \RR$, we define a new function $P\phi$  on $\H$ by
$P\phi(u) = \int_{\H} \phi(v) P(u,dv)$. For any probability measure
$\nu$ on $\H$, we take $\nu P$ to be the probability measure on $\H$ defined according to
$\nu P (A) =\int_{\H} P(u,A)\nu(du)$ for any measurable $A\subset H$.\footnote{These
left and right actions of $P$ are consistent with the case
when $\H$ is the finite set $\{1,\dots,n\}$ and $P$ is a $n\times n$
matrix given by $P_{ik}=\Prb( \text{ transition }i\rightarrow
j)$. Then $\phi \in \RR^n$ is a column vector and $\nu \in \RR^n$ is a row vector whose
nonnegative entries sum to one. As such, $P \phi$ and $\nu P$ have the
standard meaning given my matrix multiplication.}  Then $\nu P \phi$ is
simply the expected value of $\phi$ evaluated on one step of the
Markov chain generated by $P$ when the initial condition is
distributed as $\nu$.

An invariant measure for $P$ is a probability measure $\mu$
which is a fixed point for $P$ in that $\mu = \mu P$. Since starting the
Markov chain with an initial condition distributed as $\mu$ produces a
stationary sequence of random variables, $\mu$ is also called a
stationary measure for the Markov Chain generated by $P$. An invariant
measure $\mu$ is ergodic if any set $A$ which is invariant for $P$
relative to $\mu$ has $\mu(A) \in \{0,1\}$.\footnote{Recall that a measurable set
  $A$ is invariant for $P$ relative to $\mu$ if $P(u,A)=1$ for
  $\mu$-a.e. $u \in A$ and if $P(u,A)=0$ for $\mu$-a.e.
  $u \not\in A$.  More compactly this say that $A$ is invariant for $P$ relative $\mu$ if $P \indFn{A} = \indFn{A}$, $\mu$-a.e.}
  Notice that the set $\mathcal{I}$ of
invariant measures for $P$ is a convex set.
  It is classical that an invariant measure is
ergodic if and only if it is an extremal point of $\mathcal{I}$. Furthermore, different
ergodic measures are mutually singular. In this sense, the
ergodic invariant measures form the ``atoms'' for the set of invariant
measures as each invariant measure can be written as a convex
combination of ergodic invariant measures and the ergodic invariant
measures cannot be decomposed as a convex combination of other
invariant measures.

  We now lift any probability measure $\mu$ on $\H$ to a canonical
  probability measure on
  the pathspace representing trajectories of the Markov process
  generated by $P$. We denote the pathspace over $\H$ by
\begin{align*}
\H^{\N}=\{ u : \N  \rightarrow \H \}=\{ u =(u_1,u_2,\dots) : u_i \in\H \}
\end{align*}
where $\N=\{1,2,\dots\}$.  The suspension of any initial measure $\nu$ on $H$ to $\H^{\N}$
is denoted by $\nu P^{\N}$.   This pathspace measure $\nu
P^{\N}$ is defined in the standard way from the Kolmogorov
extension theorem by defining the probability  the cylinder sets $\{ U_1 \in A_1,
U_{2} \in A_2,\dots , U_{n} \in A_n\}$ where $n$ is arbitrary.  Here $A_n$ are any measurable subsets of $H$ whereas
$U_0$ is distributed as $\nu$ and given $U_{k-1}$, the $U_{k}$'s are distributed as
$P(U_{k-1},\,\cdot\,)$. The measure $\nu P^{\N}$ should be understood
as the measure on the present state and the entire future trajectory of the Markov chain
$P$  starting from the initial distribution $\mu$. The canonical action of the left shift map
$\theta(u)_j = u_{j +1}$ on $\H^\N$ corresponds to taking one step under
$P$. Hence under $\theta$, the state tomorrow becomes the state
today and  all other
states shift one day closer to the present.

In general a measure $M$ on  $H^\N$
is invariant under the shift $\theta$ if $M\theta^{-1} = M$, where
$M\theta^{-1}$ is defined by $M\theta^{-1}(A) = M(\theta^{-1}(A))$ for
all measurable $A \subset \H^\N$.
Fixing a $\theta$-invariant reference measure
$m$ on $\H^\N$, any other invariant measure $M$ on
$\H^\N$ is ergodic with respect to $(\H^\N,m,\theta)$ if whenever $A \subset
\H^\N$ is invariant for $\theta$ relative to $m$ then $M(A) \in \{0,1\}$.\footnote{Here a measurable set $A$
  is invariant for $\theta$ relative to a probability measure $m$ on $H^\N$
  if $\theta^{-1}(A) = A \text{ mod }m$,
  which is to say the  symmetric difference   $\theta^{-1}(A) \Delta
  A$ is measure zero for $m$.} As in the previous setting, an
invariant measure is ergodic if and only if it is an extremal point of
the set of invariant measures for $\theta$ on $\H^\N$.

The notions of ergodic and extremal measures
on $\H$ and $\H^\N$ are
self consistent in that the following statements are equivalent:
\begin{itemize}
\item[(i)] a measure $\mu$ on $\H$ is an ergodic invariant measure for $P$
\item[(ii)] $\mu P^{\N}$ is an  ergodic invariant measure on $H^{\N}$ relative to the shift map $\theta$
\item[(iii)] $\mu P^{\N}$ is an extremal point for the set of $\theta$-invariant measures  on $\H^\N$
\item[(iv)] $\mu$  is an extremal point for the set of $P$-invariant measures on $\H$.
\end{itemize}
For further discussion of all  these ergodic theory generalities see \cite{sinai87,Kallenberg02,ZabczykDaPrato1996}

\subsection{Equivalent Asymptotic Couplings and Unique Ergodicity}
\label{sec:EAC}
 We say that a probability measure $\Gamma$ on  $\H^{\N} \times
 \H^{\N}$ is an \emph{asymptotically equivalent coupling} of
 two measures $M_1$ and $M_2$ on $\H^\N$ if $\Gamma
 \Pi_i^{-1} \ll M_i$, for $i = 1,2$, where $\Pi_1(u,v) = u$ and $\Pi_2(u,v) = v$ . We
 will write $\tilde{\mathcal{C}}(M_1,M_2)$ for the set of all such
 asymptotically equivalent couplings.

Given any bounded (measurable) function $\phi\colon \H\rightarrow \RR$, we define
$\bar D_\phi \subset \H^{\N} \times
\H^{\N}$ by
\begin{align}
  \bar D_\phi := \Big\{ (u,v) \in \H^{\N} \times
\H^{\N}: \lim_{n\rightarrow \infty}\frac1n \sum_{k=1}^n  ( \phi(u_k) -
  \phi(v_k) ) =0 \Big\}\,.
  \label{eq:b:d:ave}
\end{align}
On the other hand a set $\mathcal{G}$ of bounded, real-valued, measurable
functions on
$H$ is said to \emph{determine measures} if, whenever $\mu_1,
\mu_2 \in Pr(H)$ are such that $\int \phi d \mu_1 = \int \phi d
\mu_2$ for all $\phi \in \mathcal{G}$, then $\mu_1 = \mu_2$.
\begin{theorem}\label{thm:asCoupling} Let $\mathcal{G}\colon H
  \rightarrow \RR$ be collection
  of functions which determines measures.
  Assume that there exists a measurable $\H_0 \subset \H$ such
  that for any $(u_0, v_0) \in \H_0\times \H_0$ and any $\phi
  \in \mathcal{G}$ there exists a $\Gamma = \Gamma(u_0, v_0, \phi)
  \in \tilde{\mathcal{C}}(\delta_{u_0}P^{\N},
  \delta_{v_0}P^\N)$ such that $\Gamma(\bar D_\phi) >0$.
  Then there exist at most one ergodic invariant measure $\mu$ for
  $P$ with $\mu(\H_0)>0$. In particular if $H_0=H$, then there exists at most one,
  and hence ergodic,
  invariant measure.
\end{theorem}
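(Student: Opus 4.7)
The plan is to argue by contradiction. Suppose $P$ admits two distinct ergodic invariant measures $\mu_1$ and $\mu_2$ with $\mu_i(\H_0)>0$. Since $\mathcal{G}$ determines measures and $\mu_1\neq \mu_2$, I can fix a single $\phi\in\mathcal{G}$ with
\[
\bar\phi_1 := \int_\H \phi\,d\mu_1 \;\neq\; \int_\H \phi\,d\mu_2 =: \bar\phi_2.
\]
By the equivalence (i)$\Leftrightarrow$(ii) recalled in Section~\ref{sec:gen:erg}, each suspension $\mu_i P^{\N}$ is a $\theta$-ergodic probability measure on $\H^\N$. Applying the Birkhoff ergodic theorem to the bounded measurable functional $u\mapsto \phi(u_1)$ under $(\H^\N,\mu_i P^\N,\theta)$ produces a measurable set $A_i\subset\H^\N$ with $\mu_i P^\N(A_i)=1$ on which $\frac{1}{n}\sum_{k=1}^n \phi(u_k)\to \bar\phi_i$.

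Next I would harvest good starting points inside $\H_0$. Disintegrating against $\mu_i$,
\[
1 \;=\; \mu_i P^\N(A_i) \;=\; \int_{\H} \delta_w P^\N(A_i)\,\mu_i(dw),
\]
so the set $B_i:=\{w\in\H:\delta_w P^\N(A_i)=1\}$ satisfies $\mu_i(B_i)=1$. Combined with $\mu_i(\H_0)>0$ this gives $\mu_i(\H_0\cap B_i)>0$, so I may pick $u_0\in \H_0\cap B_1$ and $v_0\in\H_0\cap B_2$. The hypothesis then supplies $\Gamma=\Gamma(u_0,v_0,\phi)\in\tilde{\mathcal{C}}(\delta_{u_0}P^\N,\delta_{v_0}P^\N)$ with $\Gamma(\bar D_\phi)>0$. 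The asymptotic-equivalence absolute continuities $\Gamma\Pi_1^{-1}\ll\delta_{u_0}P^\N$ and $\Gamma\Pi_2^{-1}\ll\delta_{v_0}P^\N$, together with $\delta_{u_0}P^\N(A_1)=\delta_{v_0}P^\N(A_2)=1$, transfer the Birkhoff convergence to the coupled space: $\Gamma$-a.s.\ the first coordinate $u$ satisfies $\frac{1}{n}\sum_{k=1}^n \phi(u_k)\to \bar\phi_1$ and the second coordinate $v$ satisfies $\frac{1}{n}\sum_{k=1}^n \phi(v_k)\to \bar\phi_2$. Intersecting these two full $\Gamma$-measure sets with $\bar D_\phi$, whose defining relation \eqref{eq:b:d:ave} forces the difference of these two time averages to vanish in the limit, one obtains $\bar\phi_1=\bar\phi_2$, contradicting our choice of $\phi$. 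This establishes the ``at most one ergodic invariant measure charging $\H_0$'' conclusion; the final clause for $\H_0=\H$ then follows because every $P$-invariant measure decomposes as a convex combination of ergodic ones (each of which trivially satisfies $\mu(\H)=1>0$), so uniqueness of the ergodic atom forces uniqueness of the invariant measure itself.

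The main obstacle. The delicate step is producing the pair $(u_0,v_0)\in\H_0\times\H_0$ for which the Birkhoff convergence truly holds pathwise under $\delta_{u_0}P^\N$ and $\delta_{v_0}P^\N$. This uses both the positivity hypothesis $\mu_i(\H_0)>0$ and the genuine $\theta$-ergodicity (not merely stationarity) of the suspended measure $\mu_i P^\N$; without the equivalence (i)$\Leftrightarrow$(ii) the Birkhoff limit would only be a shift-invariant random variable rather than the constant $\bar\phi_i$, and the contradiction would collapse. Everything else is bookkeeping: absolute continuity on path space transports the two a.s.\ time-average statements onto the coupled space, and the defining property of $\bar D_\phi$ kills their difference.
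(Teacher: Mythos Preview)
Your proof is correct and follows essentially the same route as the paper's: Birkhoff under the ergodic suspension, Fubini/disintegration to locate good initial points in $\H_0$, absolute continuity of the marginals to transport the a.s.\ time-average convergence to the coupled space, and the definition of $\bar D_\phi$ to force equality of the integrals. The only cosmetic difference is that you argue by contradiction from a single $\phi\in\mathcal{G}$ with $\bar\phi_1\neq\bar\phi_2$, whereas the paper fixes an arbitrary $\phi\in\mathcal{G}$ and shows $\int\phi\,d\mu=\int\phi\,d\nu$ directly; these are logically equivalent and neither gains anything over the other.
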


 \begin{remark}
   At first glance it might be surprising that equivalence is sufficient to
   determine the long time statistics. However since the Birkhoff
   Ergodic theorem implies that time averages
   along typical trajectories converge to the integral against an
   ergodic invariant measure, one only needs to draw typical infinite
   trajectories. From this it is clear that  absolutely continuity
   on the entire future trajectory indexed by $\N$  is critical and can not be replaced
   with absolutely continuity on $\{1,2,\dots,N\}$ for all $N \in \N$.
   Since absolutely continuous measures have the same paths, only
   with different weights, we see that absolutely continuous measures
   are sufficient to ensure one is drawing typical trajectories.
 \end{remark}

\begin{proof}[Proof of Theorem~\ref{thm:asCoupling}] Assume that there
  are two ergodic invariant probability measures $\mu$ and $\nu$ on
  $\H$ such that both $\mu(\H_0)>0$ and $\nu(\H_0)>0$. Fix any $\phi
  \in \mathcal{G}$. By Birkhoff's ergodic theorem there exists sets
  $A_\phi^\mu,A_\phi^\nu \subset \H^{\N}$ such
  that $\mu P^\N (A_\phi^\mu) =\nu P^\N (A_\phi^\nu) =1$ and such that if $u \in
  A_\phi^\mu$ and  $v \in
  A_\phi^\nu$ then
  \begin{align}
    \lim_{n \rightarrow \infty} \frac1n \sum_{k=1}^n \phi(u_k)  =
    \int_{\H} \phi d\mu \quad\text{and}\quad  \lim_{n \rightarrow \infty} \frac1n \sum_{k=1}^n \phi(v_k)  =
    \int_{\H} \phi d\nu \,.
    \label{eq:b:e:t:conv}
  \end{align}
  Define
  $A_\phi^\mu(u_0) = \{ \tilde u = (\tilde{u}_0, \tilde{u}_1, \ldots) \in A_\phi^\mu : \tilde u_0 = u_0\}$
  and $A_\phi^\nu(u_0)$ analogously.   Notice that $\delta_{u_0} P^\N( A_\phi^\mu(u_0)) = \delta_{u_0}P^\N(A_\phi^\mu)$
  for any $u_0 \in H$.
  Hence, by Fubini's theorem we have, for $\mu$-a.e. $u_0 \in H$,
  that $\delta_{u_0} P^\N( A_\phi^\mu(u_0)) = \delta_{u_0}P^\N(A_\phi^\mu) = 1$, and that
  $\delta_{v_0}P^\N(A_\phi^\nu(v_0)) = 1$   for $\nu$-a.e. $v_0 \in H$.

  Since we have presumed that $\H_0$ is non-trivial relative to both $\mu, \nu$, we may now
  select a pair of initial conditions $u_0, v_0 \in \H_0$
  such that $\delta_{u_0}P^{\N}(A_\phi^\mu(u_0)) =\delta_{v_0}P^{\N}(A_\phi^\nu(v_0))=
  1$.  Let $\Gamma$ be the measure in $\tilde{\mathcal{C}}(\delta_{u_0}P^{\N},
  \delta_{v_0}P^{\N})$ given in the assumptions of the
  Theorem corresponding to these initial points $u_0$, $v_0$
  and the test function $\phi$.
  Since
  $\Gamma \Pi_1^{-1} \ll
  \delta_{u_0}P^{\N}$ and $\Gamma \Pi_2^{-1} \ll
  \delta_{v_0}P^{\N}$, where again $\Pi_1(u,v)=u$ and  $\Pi_2(u,v)=v$, we have that
  $\Gamma(A_\phi^\mu(u_0) \times
  A_\phi^\nu(v_0))=1$. Defining $\bar D_\phi' = \bar D_\phi
  \cap \big( A_\phi^\mu(u_0) \times
  A_\phi^\nu(v_0) \big)$, one has $\Gamma(\bar D_\phi') >0$
  and thus we infer that $\bar D_\phi'$ is nonempty. Observe that for any $(u,v) \in \bar D_\phi'$,
  in view of \eqref{eq:b:d:ave}, \eqref{eq:b:e:t:conv} and the definition of $\bar D_\phi'$, we have
  \begin{align*}
      \int_{\H} \phi \, d\mu - \int_{\H} \phi \, d\nu =   \lim_{n \rightarrow \infty} \frac1n
    \sum_{k=1}^n (\phi(u_k) -\phi(v_n))   = 0   \,.
  \end{align*}
  Since $\phi$ was an arbitrary function in $\mathcal{G}$, which was
  assumed to be
  sufficiently rich to determine measures, we conclude that $\mu_1 =
  \mu_2$ and the proof is complete.
\end{proof}

We next provide a simple corollary of Theorem~\ref{thm:asCoupling}
to be used directly in the examples provided below.  To this
end, we consider a possibly different distance $\tilde{\rho}$ on $H$ and define
\begin{align*}
  D_{\tilde{\rho}} := \left\{ (u,v) \in H^{\N} \times H^{\N}: \lim_{n \to \infty} \tilde{\rho}(u_n, v_n) = 0\right\}.
\end{align*}
We also consider the class of test functions
\begin{align*}
  \mathcal{G}_{\tilde{\rho}} = \Big\{  \phi \in C_b(H): \sup_{u \not = v} \frac{ |\phi(u) - \phi(v)|}{\tilde{\rho}(u,v)} < \infty\Big\}.
\end{align*}
The corollary is as follows:
\begin{corollary}\label{cor:precise:app}
Suppose that $\mathcal{G}_{\tilde{\rho}}$ determines measures on $(H, \rho)$ and assume that $D_{\tilde{\rho}}$ is a
measurable subset of $H^{\N} \times H^{\N}$.    If $H_0 \subset H$ is a measurable set such that for each pair $u_0, v_0 \in H_0$
there exists an element $\Gamma \in \tilde{\mathcal{C}}(\delta_{u_0}P^{\N}, \delta_{v_0}P^\N)$ with $\Gamma(D_{\tilde{\rho}})> 0$,
then there exists at most one ergodic invariant measure $\mu$ with $\mu(H_0) > 0$.
\end{corollary}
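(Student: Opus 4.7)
The plan is to reduce Corollary~\ref{cor:precise:app} to a direct application of Theorem~\ref{thm:asCoupling} with the class of test functions taken to be $\mathcal{G} = \mathcal{G}_{\tilde\rho}$ and the measurable set $H_0$ as given. Two hypotheses of the theorem must be verified: (i) that $\mathcal{G}_{\tilde\rho}$ determines measures, and (ii) that for every pair $u_0, v_0 \in H_0$ and every $\phi \in \mathcal{G}_{\tilde\rho}$ there exists $\Gamma \in \tilde{\mathcal{C}}(\delta_{u_0}P^\N, \delta_{v_0}P^\N)$ with $\Gamma(\bar D_\phi) > 0$. Condition (i) is literally the hypothesis of the corollary. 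So the only work is to get from the single $\tilde\rho$-contracting coupling to a nonzero $\bar D_\phi$-mass for every $\phi \in \mathcal{G}_{\tilde\rho}$.

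The key observation is the set inclusion
\begin{align*}
D_{\tilde\rho} \subset \bar D_\phi \qquad \text{for every } \phi \in \mathcal{G}_{\tilde\rho}.
\end{align*}
Indeed, fix $\phi \in \mathcal{G}_{\tilde\rho}$ with Lipschitz constant $L < \infty$. If $(u,v) \in D_{\tilde\rho}$, then $|\phi(u_n) - \phi(v_n)| \le L\, \tilde\rho(u_n, v_n) \to 0$ as $n\to\infty$, and the classical Ces\`aro lemma then yields
\begin{align*}
\lim_{n\to\infty}\frac{1}{n}\sum_{k=1}^{n}\bigl(\phi(u_k) - \phi(v_k)\bigr) = 0,
\end{align*}
which is exactly the defining condition of $\bar D_\phi$ in \eqref{eq:b:d:ave}.

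Given this inclusion, let $u_0, v_0 \in H_0$ and $\phi \in \mathcal{G}_{\tilde\rho}$ be arbitrary, and let $\Gamma \in \tilde{\mathcal{C}}(\delta_{u_0}P^\N, \delta_{v_0}P^\N)$ be the coupling furnished by the corollary's hypothesis, which satisfies $\Gamma(D_{\tilde\rho}) > 0$. The same $\Gamma$ then satisfies $\Gamma(\bar D_\phi) \ge \Gamma(D_{\tilde\rho}) > 0$, so the hypotheses of Theorem~\ref{thm:asCoupling} are met with $\mathcal{G} = \mathcal{G}_{\tilde\rho}$. Applying that theorem yields the desired conclusion that there is at most one ergodic invariant measure $\mu$ with $\mu(H_0) > 0$.

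There is essentially no hard step; the only point requiring any care is to ensure that $\mathcal{G}_{\tilde\rho}$ really can be used as the determining class in Theorem~\ref{thm:asCoupling}, which is exactly why the assumption that $\mathcal{G}_{\tilde\rho}$ determines measures on $(H,\rho)$ is imposed. The measurability of $D_{\tilde\rho}$ assumed in the statement ensures that $\Gamma(D_{\tilde\rho})$ is well defined; the measurability of $\bar D_\phi$ needed to apply Theorem~\ref{thm:asCoupling} is automatic since $\phi$ is measurable. Thus the corollary is simply the packaging of Theorem~\ref{thm:asCoupling} adapted to the (frequently convenient) situation where asymptotic coupling is naturally verified in a weaker metric $\tilde\rho$ than the one defining the Polish topology on $H$.
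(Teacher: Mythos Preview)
Your proof is correct and follows exactly the paper's approach: the key inclusion $D_{\tilde\rho} \subset \bar D_\phi$ for every $\phi \in \mathcal{G}_{\tilde\rho}$, followed by a direct invocation of Theorem~\ref{thm:asCoupling}. You supply slightly more detail (the Lipschitz bound and the Ces\`aro step) than the paper, which simply states the inclusion and says the result follows immediately.
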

\begin{proof}
With the observation that
\begin{align*}
  D_{\tilde{\rho}} \subset \bar{D}_{\phi}  \textrm{ for every }  \phi \in \mathcal{G}_{\tilde{\rho}},
\end{align*}
the desired result follows immediately from Theorem~\ref{thm:asCoupling}.
\end{proof}
\begin{remark}\label{rm:howWeUseIt:alt}
   The conditions imposed on $\tilde{\rho}$, $\mathcal{G}_{\tilde{\rho}}$ and $D_{\tilde{\rho}}$
   in Corollary~\ref{cor:precise:app} are easily verified in practice.  For instance, consider
   $H = H^1(\TT^d)$ with the usual topology and take $\tilde{\rho}(u_0, v_0)  = \|u_0 - v_0 \|_{L^2}$,
   the $L^2(\TT^d)$ distance.  Since $\tilde{\rho}$ is continuous on $H^1$ we see that $D_\rho$ is measurable.
   Furthermore, a simple mollification argument allows one to show that
   $\mathcal{G}_{\tilde{\rho}}$ is determining.  Similar considerations will
   allow us to directly apply Corollary~\ref{cor:precise:app} in each of the examples below.
\end{remark}

\subsection{Girsanov's Theorem Though a Particular Lens}

Let $\{W_k(t): k=1,\dots,d\}$ be a collection of independent one-dimensional Brownian motions and define
$W(t)=(W_1(t),\dots,W_d(t))$. Take $h(t)$ to be an $\RR^d$-valued stochastic
process adapted to the filtration generated by $W(t)$ such
that
\begin{align}
  \int_0^\infty |h(s)|^2 ds  \leq C \quad\text{almost surely,}
  \label{eq:st:nov:cond}
\end{align}
for some finite (deterministic) constant $C$. Now define $\widetilde{W}$ by
\begin{align*}
   \widetilde{W}(t) = W(t) + \int_0^t h(s) ds
\end{align*}
for all $t\geq 0$. The following result is a restating of the Girsanov
theorem (see e.g. \cite{RevuzYor1999} for further details):
\begin{theorem}\label{thm:GirsonovRedux} In the above setting, the law of $\widetilde{W}$ is
  equivalent to that of $W$ as measures on $C([0,\infty),
  \RR^d)$. Furthermore if $\Phi$ is a measurable map from  $C([0,\infty),
  \RR^d)$ into $H^\N$ for some Polish space $\H$, then the law of
  $\Phi(\widetilde{W})$ is equivalent to that of  $\Phi(W)$ as
  measures on $\H^\N$.
\end{theorem}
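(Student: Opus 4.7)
The plan is to reduce both assertions to the classical Girsanov theorem; the key observation is that the deterministic almost-sure bound \eqref{eq:st:nov:cond} on $\int_0^\infty |h(s)|^2 ds$ gives Novikov's condition uniformly on the entire half-line $[0,\infty)$, and this upgrades Girsanov from arbitrary finite time horizons $T$ to a genuine change of measure on the full $\sigma$-algebra $\mathcal{F}_\infty$ generated by $W$.

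Concretely, I would first introduce the stochastic exponential
\begin{align*}
M(t) = \exp\left( -\int_0^t h(s) \spcdot dW(s) - \tfrac{1}{2}\int_0^t |h(s)|^2 ds \right),
\end{align*}
which is a nonnegative local martingale. From \eqref{eq:st:nov:cond} one immediately obtains $\E \exp\bigl(\tfrac12 \int_0^\infty |h(s)|^2 ds\bigr) \leq e^{C/2} < \infty$, and hence Novikov's criterion (applied globally, not just on each $[0,T]$) shows that $M$ is a uniformly integrable true martingale. Consequently $M_\infty := \lim_{t \to \infty} M(t)$ exists $\Prb$-almost surely and in $L^1(\Prb)$, with $\E M_\infty = 1$ and $M_\infty > 0$ almost surely. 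Setting $d \mathbb{Q} = M_\infty \, d\Prb$ therefore defines a probability measure on $\mathcal{F}_\infty$ equivalent to $\Prb$.

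I would then invoke the classical Girsanov theorem on each interval $[0,T]$ to conclude that, under $\mathbb{Q}$, the process $\widetilde W|_{[0,T]}$ is a standard $\RR^d$-valued Brownian motion; consistency across $T$ upgrades this to the statement that $\widetilde W$ is a Brownian motion on $[0,\infty)$ under $\mathbb{Q}$. Thus for every Borel set $A \subset C([0,\infty),\RR^d)$ we have $\mathbb{Q}(\widetilde W \in A) = \Prb(W \in A)$, since both sides equal the Wiener measure of $A$. At the same time, because $\Prb \sim \mathbb{Q}$ on $\mathcal{F}_\infty$ and $\widetilde W$ is $\mathcal{F}_\infty$-measurable, $\Prb(\widetilde W \in A) = 0$ if and only if $\mathbb{Q}(\widetilde W \in A) = 0$. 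Chaining these two facts yields the desired equivalence of the laws of $W$ and $\widetilde W$ on $C([0,\infty),\RR^d)$.

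The second assertion is then a purely measure-theoretic consequence: if $\mu_1 \sim \mu_2$ are equivalent probability measures on a measurable space $X$ and $\Phi : X \to Y$ is any measurable map, then $\mu_1(\Phi^{-1}(B)) = 0$ if and only if $\mu_2(\Phi^{-1}(B)) = 0$ for every measurable $B \subset Y$, so $\Phi_\ast \mu_1 \sim \Phi_\ast \mu_2$. Applying this with $X = C([0,\infty),\RR^d)$, $Y = \H^\N$ and the given $\Phi$ completes the proof. The only genuinely delicate step is the upgrade from finite to infinite horizon, i.e.\ the global uniform integrability of $M$; and this is precisely what the strong almost-sure bound \eqref{eq:st:nov:cond} is tailored to supply, so once that step is in place everything else is formal.
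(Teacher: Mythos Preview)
Your argument is correct. Note, however, that the paper does not actually supply a proof of this theorem: it is stated as a direct restatement of the classical Girsanov theorem, with a reference to Revuz--Yor for the details, followed only by the remark that the hypothesis \eqref{eq:st:nov:cond} is stronger than what is strictly needed. So there is no ``paper's own proof'' to compare against; you have written out precisely the standard argument that the citation is pointing to.

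Your sketch is in fact a clean and complete justification. The only step worth a small additional comment is the assertion that $M_\infty>0$ almost surely: this follows because $\int_0^\infty |h|^2\,ds\le C$ forces the stochastic integral $\int_0^t h\cdot dW$ to converge almost surely (its quadratic variation is bounded), so $M_\infty$ is the exponential of a finite random variable. One could also note, as a slight strengthening of the uniform-integrability step, that the same bound gives $\sup_t \E M_t^2 \le e^{C}$, so $M$ is in fact $L^2$-bounded. Either way, the conclusion and the pushforward argument for the second assertion are exactly right.
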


\begin{remark}
  The assumption given in \eqref{eq:st:nov:cond} is an overkill, as
  Girsanov's Theorem holds under much less restrictive
  assumptions. However, in all of our application
  \eqref{eq:st:nov:cond} will hold.
\end{remark}

\subsection{A Recipe for  Asymptotic Coupling}
\label{sec:recipeC}
We now outline the basic logic used to in all of our examples. To
avoid technicalities arising in the infinite dimensional setting such
as the domain of various operators, we begin with an example in finite
dimensions. However, as we will see, these arguments directly apply
to infinite dimensional SPDEs when all of the objects involved are
well defined.

Consider the stochastic differential equation given by
\begin{align}\label{sde}
  dx = F(x)dt + \sigma dW \quad\text{with}\quad x(0)=x_0 \in \RR^d\,,
\end{align}
where $F\colon \RR^d \rightarrow \RR^d$, $W$ is an $n$-dimensional
Brownian Motion, and $\sigma$ is a $d\times n$-dimensional matrix
chosen so that \eqref{sde} has global solutions.  Fundamentally, the question of unique ergodicity
turns on showing that \eqref{sde} and a second copy
\begin{align}\label{sde-y}
  dy = F(y)dt + \sigma d\widetilde{W} \quad\text{with}\quad y(0)=y_0 \in \RR^d\,,
\end{align}
have identical long time statistics even when $x_0 \neq y_0$. Since we
are only interested in showing that the marginals of $(x,y)$ have the
same statistics, we are free to couple the two Brownian motions
$(W,\widetilde{W})$ in any way we wish, building in correlations which
are useful in the analysis. The essence of
Corollary~\ref{cor:precise:app} is that we can even replace
$\widetilde{W}$ with another process as long as it is absolutely
continuous with respect to a Brownian Motion on the infinite time
horizon, namely $C([0,\infty); \RR^n)$. To this end, we use $\widetilde{y}$
to represent a solution to \eqref{sde-y} driven by this modified
process $\widetilde{W}$, and take
$\widetilde{W}$ to be a Brownian Motion shifted with a feedback
control $G$ which depends on the current state of $(x,\widetilde{y})$ and is
designed to send $|x(t)-\widetilde{y}(t)| \rightarrow 0$ as $t\rightarrow \infty$. One
could consider more general adapted controls (or even non-adapted as
in \cite{HairerMattingly06,HairerMattingly2011}\footnote{Of course,
  non-adapted controls make things significantly more technical. In
  particular, the classical Girsanov Theorem can not be used.} ) but this class has
proven sufficient for all of the problems we present here. In addition
to the control $G$, we will introduce a stopping time $\tau$ which will turn
off the control should $(x(t),\widetilde{y}(t))$ separate too much.
This stopping time ensures that \eqref{eq:st:nov:cond} holds and hence
guarantees that our shifted process $\widetilde{y}$ is absolutely continuous
with respect to $y$.

In light of this discussion,  consider the system
\begin{align*}
  dx &= F(x)dt + \sigma dW\quad\text{with}\quad {x(0)=x}_0 \in \RR^d\,,\\
d\widetilde{y} &= F(\widetilde{y})dt  + G(x,\widetilde{y}) \one{t \leq
                 \tau} dt+ \sigma dW\quad\text{with}\quad y(0)=\widetilde{y}_0 \in \RR^d\,,
\end{align*}
where $G\colon \RR^d\times \RR^d \rightarrow \RR^d$ is our feedback control and $\tau$ is a
stopping time adapted to the filtration generated by $\{ (x_s,
\widetilde{y}_s) : s \leq t\}$. We assume that $G$ and $\tau$ are such
that the system $(x,\widetilde{y})$ has global solutions. Furthermore,
we assume that  everything is constructed so that $\Prb( \tau = \infty) >0$ and $|x(t) - \widetilde{y}(t)|
\rightarrow 0$ on the event $\{\tau= \infty\}$. If in addition,
\begin{align}\label{toyBound}
  \int_0^\infty   |\sigma^{-1}G(x(t),\widetilde{y}(t))|^2 \one{t \leq \tau}\, dt < C\quad\text{a.s.,}
\end{align}
for some deterministic (finite) $C>0$, then by Theorem~\ref{thm:GirsonovRedux}
\begin{align*}
   \widetilde{W}(t) = W(t) + \int_0^t \sigma^{-1}G(x(s),\widetilde{y}(s))\one{s \leq \tau} ds
\end{align*}
is equivalent to a Brownian motion on $C([0,\infty), \RR^n)$. Implicit
in equation \eqref{toyBound} is the assumption that the range of $G$
is contained in the range of $\sigma$.\footnote{$\sigma$ need not be
  invertible. As long as the range of $G$ is contained in the range of
  $\sigma$ then $\sigma^{-1}$ can be taken to be the pseudo-inverse.}  Moreover, by Theorem~\ref{thm:GirsonovRedux} and \eqref{toyBound}, we also
  see that $\widetilde{y}$ is
equivalent to the solution of \eqref{sde-y} on $C([0,\infty), \RR^d)$. In summary, if
$P\phi(x_0) = \E \phi(x(t,x_0))$ for some $t>0$ where $x(t,x_0)$ solves
\eqref{sde} (starting from the initial condition $x_0 \in \RR^d$), $H_0=H=\mathbb{R}^d$,
and $\delta_{x_0}P^\N$ is defined as in Section~\ref{sec:gen:erg}, then
the law induced by $\{ (x(nt,x_0), \widetilde{y}(nt, y_0)) : n=1,2\dots\}$ is an
element of
$\widetilde{\mathcal{C}}( \delta_{x_0}P^\N,
\delta_{\widetilde{y}_0}P^\N)$ which charges $\mathcal{D}_{|\cdot|}$ with positive probability.
And hence by Corollary~\ref{cor:precise:app}, we know that \eqref{sde}
has at most one invariant measure.

The question remains, which $G$ and $\tau$ do we choose? There is no unique choice.
One only needs to ensure that the range of $G$ is contained in the
range of $\sigma$, that \eqref{toyBound} holds, and that $|x(t) - \widetilde{y}(t)|
\rightarrow 0$ on the event ${\tau=\infty}$. Typically, we will take $\tau =
\inf\{ t >0: \int_0^t |  \sigma^{-1}G(x_s,\widetilde{y}_s)|^2 ds \geq R\}$
  for some $R>0$.  To explore this question informally, we define
  $\rho(t)=x(t)-\widetilde{y}(t)$ and observe that
  \begin{align*}
    \frac{d}{dt}\rho(t) = F(x) -F(\widetilde{y}) - G(x,\widetilde{y})\one{t \leq \tau} ,
  \end{align*}
provided $\sigma^{-1}G$ is well defined on the interval $[0,\tau]$. If $\sigma$ is invertible, one choice is to take $G(x,\widetilde{y})=
F(x) -F(\widetilde{y}) + \lambda (x-\widetilde{y})$ which results in
\begin{align*}
    \frac{d}{dt}\rho(t) = - \lambda\rho
\end{align*}
for $t<\tau$, so that $|x(t) - \widetilde{y}(t)|$ clearly decays towards zero on $[0,\tau)$ with a rate independent of $R$. Furthermore, \eqref{toyBound} holds
provided $F$  does not grow too fast and has some H\"older
regularity, and $\tau=\infty$ almost surely provided $R>0$ is chosen sufficiently large.
Often taking $G(x,\widetilde{y})=\lambda (x-\widetilde{y})$ is sufficient for a
$\lambda$ large enough (see
\cite{Hairer02,HairerMattinglyScheutzow2011} for example).
 If only part of $F$ leads to instability, say $\Pi F$ for
some projection $\Pi$, then we can often take $G(x,\widetilde{y})=
\Pi( F(x) -F(\widetilde{y}) )+ \lambda \Pi(x-\widetilde{y})$ or even simply
$G(x,\widetilde{y})=\lambda \Pi(x-\widetilde{y})$, and only assume that the range of $\sigma$
contains the range of $\Pi$. This loosening of the assumptions on the range of $\sigma$
is one of the principle advantages of this point of view for SPDEs.\footnote{
Taking $G(x,\widetilde{y})=
F(x) -F(\widetilde{y}) + \lambda \tfrac{x-\widetilde{y}}{|x-\widetilde{y}|}$ leads to $\rho$
dynamics which converge to zero in finite time. This can be used to
prove convergence in total variation norm. However it is less useful
when one takes  $G(x,\widetilde{y})=
\Pi(F(x) -F(\widetilde{y}) + \lambda \tfrac{x-\widetilde{y}}{|x-\widetilde{y}|})$ as the remaining
degrees of freedom only contract asymptotically at $t \rightarrow
\infty$. Nonetheless, such a control can simplify the convergence analysis in some cases.}

\section{Examples}
\label{sec:examples}

\subsection{Navier-Stokes on a Domain}
\label{sec:NSE:Channel}

Our first example is the 2D stochastic Navier-Stokes equation
\begin{align} \label{eq:nse}
  &d\bfU + \bfU \cdot \nabla \bfU dt =  (\nu \Delta \bfU - \nabla
    \pres + \mathbf{f})dt  + \sum_{k =1}^d \sigma_k dW^k, \quad \nabla \cdot\bfU = 0,
\end{align}
for an unknown velocity field $\bfU = (u_1, u_2)$ and pressure $\pres$ evolving on a bounded domain $\DD \subset \RR^2$ where we assume that
$\partial \DD$ is smooth and $\bfU$ satisfies the no-slip (Dirichlet) boundary condition
\begin{align}
  \bfU_{| \partial \DD} = 0.
    \label{eq:nse:bc}
\end{align}
Here, in addition to the given vector fields $\sigma_j \in L^2(\DD)$
and a corresponding collection of $W = (W_1, \ldots, W_d)$ independent
standard Brownian motions,
the dynamics of \eqref{eq:nse}--\eqref{eq:nse:bc}
are also driven by a fixed, deterministic $\mathbf{f} \in L^2(\DD)$.
We refer to e.g. \cite{ConstantinFoias1988, Temam2001} and to \cite{CIME08}
for further details on the mathematical setting of the Navier-Stokes
equations in the deterministic and stochastic frameworks respectively.

\begin{remark}
  If either $\mathbf{f}$ or any of the $\sigma_k$ are not divergence free, they
  can be replaced with their projection onto the divergence free
  vector fields without changing the dynamics as this only changes
  the pressure which acts as a Lagrange multiplier in this setting,
  keeping solutions on the space of divergence free vector fields.
\end{remark}

\subsubsection{Mathematical Preliminaries}
We consider \eqref{eq:nse} on the phase space
\begin{align*}
  H :=  \{ \bfU \in L^2(\DD)^2: \nabla \cdot \bfU = 0, \bfU \cdot \mathbf{n} = 0 \},
\end{align*}
where $\mathbf{n}$ is the outward normal to $\partial \DD$.
Denote $P_L$ as the orthogonal projection of $L^2(\DD)^2$ onto $H$.
The space of vector fields whose gradients are integrable in $L^2(\DD)^2$ are also relevant and we define
$V :=  \{ \bfU \in H^1(\DD)^2: \nabla \cdot \bfU = 0, \bfU_{| \partial \DD} = 0 \}$.
We denote the norms associated to $H$ and $V$ respectively as $| \cdot |$ and $\| \cdot \|$.

The Stokes operator is defined as $A\bfU = - P_L \Delta \bfU$, for any vector field $\bfU \in V \cap H^2(\DD)^2$.    Since $A$ is self-adjoint
with a compact inverse we infer that $A$ admits an increasing sequence of eigenvalues $\lambda_k \sim k$ diverging to infinity with the corresponding eigenvectors
$e_k$ forming a complete orthonormal basis for $H$.  We denote by $P_N$ and $Q_N$ the projection onto $H_N = \mbox{span}\{ e_k: k =1, \ldots N\}$
and its orthogonal complement, respectively.  Recall the generalized Poincar\'e inequalities
\begin{align}
   \| P_N \bfU\|^2 \leq \lambda_N |P_N \bfU|^2  \quad  | Q_N \bfU|^2 \leq \lambda_N^{-1}  \| Q_N \bfU\|^2
   \label{eq:gen:poincare}
\end{align}
hold for all sufficiently smooth $\bfU$ and any $N \geq 1$.

Recall that for all $\bfU_0 \in H$ and any fixed, finite $d$,  \eqref{eq:nse} admits a unique
solution
\begin{align*}
\bfU(\spcdot) = \bfU(\spcdot, \bfU_0) \in L^2(\Omega; C([0,\infty); H) \cap L^2_{loc}([0,\infty); V)),
\end{align*}
which depends continuously in $H$ on $\bfU_0$ for each $t \geq 0$.  As such the transition functions $P_t(A, \bfU_0) = \Prb(\bfU(t,\bfU_0) \in A)$ are well defined
for any $\bfU_0 \in H$, $t \geq 0$ and any Borel subset $A$ of $H$, and define an associated Feller Markov semigroup $\{P_t\}_{t\geq 0}$
on $C_b(H)$.

We next recall some basic energy estimates for \eqref{eq:nse}. Applying the It\={o} lemma to \eqref{eq:nse} we find that
\begin{align*}
  d | \bfU |^2 + 2\nu \| \bfU\|^2 dt = 2\langle \mathbf{f}, \bfU \rangle dt +  |\sigma|^2 dt +  2 \langle \sigma, \bfU\rangle dW
\end{align*}
where, for any $\mathbf{v} \in H$,  $\langle \sigma, \mathbf{v} \rangle : \RR^d \to \RR$ is the linear operator defined by $\langle \sigma, \mathbf{v} \rangle w := \sum_{k = 1}^n
\langle \sigma_k, \mathbf{v} \rangle w_k$, $w \in \RR^d$ and $|\sigma|^2 := \sum_{k =1}^n |\sigma_k|_{L^2}^2$ is the mean
instantaneous energy injected into the system per unit time.
Thus, for $R \geq 0$, using exponential martingale estimates\footnote{Recall that for any continuous martingale
$\{M(t)\}_{t\geq 0}$,
\begin{align}
\Prb\big( \sup_{t \geq 0} \  M(t) - \gamma \langle M \rangle(t) \, \geq\, R \big) \leq e^{-\gamma R}
\label{eq:exp:mart}
\end{align}
for any $R, \gamma >0$ where $ \langle M \rangle(t)$ is the quadratic variation of $M(t)$.}
we infer that, for $\alpha = \alpha(|\sigma|, \nu) = \frac{\nu}{|\sigma |^2}$, independent of $R$
\begin{align}
  \Prb \Big( \sup_{t \geq 0} \  |\bfU(t)|^2 +
  \nu \int_0^t \|\bfU\|^2 ds -
  (| \sigma |^2 + \tfrac{|A^{-\frac{1}{2}}f|^2}{2\nu} ) t - |\bfU_0|^2  \,\geq\, R \Big) \leq \exp(-\alpha R).
  \label{eq:exp:mg:est}
\end{align}
Note that \eqref{eq:exp:mg:est} implies that time averaged measures
$\nu_T(A) =\frac{1}{T} \int_0^T  \Prb(\bfU(s) \in A) ds$ are a tight sequence
since $V$ is compactly embedding into $H$. Since $\bfU_0 \mapsto \E
\phi(\bfU(t, \bfU_0))$ is continuous and bounded in $L^2$ whenever $\phi$ is (namely
the Markov semigroup is Feller), the Krylov-Bogolyubov theorem implies the collection of invariant measures
corresponding to \eqref{eq:nse} is non-empty.

\subsubsection{Asymptotic Coupling Arguments}
Having now reviewed the basic mathematical setting of \eqref{eq:nse},
the uniqueness of invariant measures corresponding to \eqref{eq:nse}
is established using the asymptotic coupling framework introduced above.  Fix any
$\bfU_0, \tbfU_0 \in H$ and consider
$\bfU(\spcdot) = \bfU(\spcdot, \bfU_0)$ solving \eqref{eq:nse} with
initial data $\bfU_0$, and $\tbfU$ solving
\begin{align}
  &d\tbfU + \tbfU \cdot \nabla \tbfU dt =  (\nu \Delta \tbfU +
    \indFn{\{\tau_K > t\}} \lambda P_N( \bfU - \tbfU) +  \nabla
    \tilde{\pres}+ f)dt  + \sum_{k =1}^d \sigma_k dW^k,
  \quad \nabla \cdot\tbfU = 0, \ \ \tbfU(0) = \tbfU_0,
  \label{eq:nse:shift}
\end{align}
where
\begin{align*}
  \tau_K := \inf_{t \geq 0}  \left\{   \int_0^t |P_N  (\bfU - \tbfU)|^2 ds \geq K \right\}
\end{align*}
and $K, \lambda> 0$ are fixed positive parameters which we will
specify below as a function of $\bfU_0, \tbfU_0$. In the context of
the framework presented in Section~\ref{sec:recipeC},
$G(\bfU ,\tbfU ) = \lambda P_N( \bfU - \tbfU)$ and the stopping time
is $\tau_K$.

We now make the connection with Corollary~\ref{cor:precise:app}
explicit.  Fix $T>0$ and take $t_n=nT$. Define the measures  $\mathfrak{m}$ and
$\mathfrak{n}$ on $H^\N$ to be, respectively, the laws of the random
vectors
\begin{align*}
  \big( \bfU(t_1, \tilde{\bfU}_0 ) ,\bfU(t_2, \tilde{\bfU}_0 )
  ,\dots\big)  \qquad \text{and}\qquad \big( \tbfU(t_1, \tilde{\bfU}_0 ) ,\tbfU(t_2, \tilde{\bfU}_0 )
  ,\dots\big) \,.
\end{align*}
The Girsanov theorem as presented in
Theorem~\ref{thm:GirsonovRedux}, implies that $\mathfrak{n}$ is mutually absolutely continuous with
respect to $\mathfrak{m}$.  Indeed, let $h(t) =\indFn{\{\tau_K > t\}} \lambda \sigma^{-1}  P_N( \bfU -\tbfU)$,
where $\sigma^{-1}$ is the psuedo-inverse of $\sigma$.
Thanks to the  definition of the stopping times $\tau_K$, we have, for any
choice of $\lambda>0$ and $K >0$, that $h$ satisfies the condition \eqref{eq:st:nov:cond}. We again
emphasize
that the equivalence of the measures $\mathfrak{m}$ and $\mathfrak{n}$
holds on the entire infinite trajectory sampled at the times
$\{T,2T, \dots\}$ which is significantly stronger than absolute continuity for the
trajectories sampled at finite number of times $\{T,2T, \dots,nT\}$ for
all $n>0$.

We now define the measure $\Gamma$ on the space $H^\N \times H^\N$ as
the law of the random vector
\begin{align*}
   \big(\bfU(t_n,\bfU_0) , \tbfU(t_n,\tbfU_0)\big)_{n \in \N}\,.
\end{align*}
In view of the discussions in the previous paragraph, for any $\lambda, K > 0$, $\Gamma$ is an
element $\tilde{C}(\delta_{u_0}P^{\N}, \delta_{v_0}P^\N)$.
The uniqueness of invariant measures corresponding to
\eqref{eq:nse} therefore follows immediately from Corollary~\ref{cor:precise:app} if, for each
$\bfU_0, \tbfU_0 \in H$, we can find a corresponding $\lambda, K>0$
(which may well depend on $\bfU_0, \tbfU_0 \in H$) such that
$\bfU(t) - \tbfU(t) \to 0$ in $H$ as $t \to \infty$ on a set of
nontrivial probability.

Take $\bfV = \bfU- \tbfU$ and $\dpres =  \pres - \tilde{\pres}$. We have that
\begin{align}
  \pd{t} \bfV - \nu \Delta \bfV + \indFn{\{\tau_K > t\}} \lambda  P_{N} \bfV = - \nabla q -  \bfV\cdot \nabla \bfU - \tbfU \cdot \nabla \bfV, \quad \nabla \cdot \bfV = 0\,.
  \label{eq:diff:eq:1}
\end{align}
Taking $\lambda = \nu \lambda_{N}$, the Poincar\'e inequality,
\eqref{eq:gen:poincare} implies that
\begin{align}
  \indFn{\{\tau_K > t\}}
  \lambda_N \nu |P_{N} \bfV|^2+ \nu \| \bfV\|^{2}  \geq \indFn{\{\tau_K > t\}}
  (\nu \lambda_N   |P_{N} \bfV|^2+ \nu\| Q_{N}\bfV\|^{2} ) = \indFn{\{\tau_K > t\}}
  \nu   \lambda_N|\bfV|^2\,.
  \label{eq:inv:pon:app}
\end{align}
Multiplying \eqref{eq:diff:eq:1} with $\bfV$, integrating over $\DD$,
using that $\bfV, \bfU, \tbfU$ are all divergence free and \eqref{eq:inv:pon:app} we obtain
\begin{align*}
  \frac{d}{dt} |\bfV|^{2} + \nu \| \bfV\|^{2} + \lambda_N \nu \indFn{\{\tau_K > t\}} |\bfV|^{2}
  \leq  \left| \int_{\DD} \bfV \cdot \nabla \bfU \cdot \bfV dx \right| \leq C | \bfV | \| \bfV \| \| \bfU \|
  \leq \nu \| \bfV \|^{2} + C_1| \bfV |^{2} \| \bfU \|^{2}
\end{align*}
where $C_1$ depends only on $\nu$ and universal quantities from Sobolev embedding.
Rearranging and using the Gr\"onwall lemma we obtain
\begin{align}
   |\bfV(t)|^{2} \leq |\bfU_{0} - \tbfU_{0}|^{2}\exp\Big(  -\lambda_N \nu t + C_1  \int_{0}^{t}\| \bfU \|^{2}ds\Big),
   \label{eq:grone:wall:conclusion}
\end{align}
for any $t \in [0, \tau_K]$.

Now, for any $R > 0$, consider the sets
\begin{align*}
   E_{R} := \left\{  \sup_{t \geq 0} \Big( |\bfU(t)|^2 +  \nu  \int_0^t \| \bfU\|^2 ds - (| \sigma |^2 +\tfrac{|A^{-\frac12}f|^2}{2 \nu}) t - |\bfU_0|^2 \Big) < R\right\}.
\end{align*}
Notice that, in view of \eqref{eq:exp:mg:est}, these sets have nonzero probability
for every $R =R(\nu,|\sigma|)> 0$ sufficiently large.
On the other hand, on $E_{R}$, \eqref{eq:grone:wall:conclusion} implies
\begin{align*}
   |\bfV(t)|^{2}
   \leq |\bfU_{0} - \tilde{\bfU}_{0}|^{2} \exp\Big(\frac{C_1}{\nu} (R + |\bfU_{0}|^{2}) \Big)
   \exp\Big(  -\lambda_N \nu t + \frac{C_1}{\nu}
  (| \sigma|^2 + \tfrac{|A^{-\frac12}f|^2}{4\nu} )t  \Big).
\end{align*}
for each $t \in [0, \tau_K]$.  Note carefully that the constant $C_1$
is independent of $N, \lambda, K >0$
and $\bfU_{0}, \tilde{\bfU}_{0}$.  By picking $N$ such that
\begin{align}
  \frac{\nu \lambda_{N}}{2} - \frac{C_1}{\nu}\big( | \sigma|^2 +  \frac{|A^{-\frac12}f|^2}{2\nu} \big) > 0,
  \label{eq:the:lamb:choice}
\end{align}
we infer, for $\lambda = \nu \lambda_N$,
\begin{align}
   |\bfV(t)|^{2}
   \leq |\bfU_{0} - \tilde{\bfU}_{0}|^{2} \exp\Big(\frac{C_1}{\nu} (R + |\bfU_{0}|^{2}) \Big)
   \exp\Big(  -\frac{\lambda}{2} t\Big),
   \label{eq:good:decay:final}
\end{align}
on $E_{R}$ for every $t \in [0, \tau_K]$ and where we again emphasize that $C_1$ does not depend on $K$ in \eqref{eq:nse:shift}.
By now choosing $K = K(\nu,  |\sigma|^2)$ sufficiently large we are forced to conclude from \eqref{eq:good:decay:final} that $\{\tau_K = \infty\} \supset E_R$
and hence on the non-trivial set $E_R$ we infer that
\begin{align*}
  \bfU(t) - \tbfU(t) \longrightarrow 0 \quad\textrm{ in }\quad H.
\end{align*}
as $t \to \infty$.

In conclusion we have proven that
\begin{proposition}
  \label{prop:nse:unique}
  For every $\nu > 0$ there exists $N = N(\nu, | \sigma |^2,|A^{-\frac12}f|)$ such
  that if $\mathrm{Range}(\sigma) \supset H_N=P_N H$ then
  \eqref{eq:nse} has a unique ergodic invariant measure.
\end{proposition}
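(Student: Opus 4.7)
The plan is to apply Corollary~\ref{cor:precise:app} with the distance $\tilde\rho(\bfU,\tbfU) = |\bfU - \tbfU|$ on $H$ and $H_0 = H$. Existence of an invariant measure is essentially already in hand: the exponential energy bound \eqref{eq:exp:mg:est} combined with the compact embedding $V \hookrightarrow H$ gives tightness of the Cesaro averages of $P_t(\bfU_0,\cdot)$, and the Feller property of $P_t$ in $H$ then yields an invariant measure via Krylov-Bogolyubov. The hypotheses of Corollary~\ref{cor:precise:app} are easily checked as in Remark~\ref{rm:howWeUseIt:alt}: the $\tilde\rho$-Lipschitz bounded functions on $H$ determine measures, and $D_{\tilde\rho}$ is Borel since $\tilde\rho$ is continuous on $H \times H$.

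For the coupling, given $\bfU_0, \tbfU_0 \in H$, I would run $\bfU(\cdot)$ as the solution of \eqref{eq:nse} from $\bfU_0$ driven by $W$ and take $\tbfU(\cdot)$ to solve \eqref{eq:nse:shift} from $\tbfU_0$ with feedback $G(\bfU,\tbfU) = \lambda P_N(\bfU - \tbfU)$ cut off at $\tau_K$. Because $\mathrm{Range}(\sigma) \supset H_N$ by hypothesis, the pseudoinverse $\sigma^{-1} G$ is well defined, and the definition of $\tau_K$ guarantees that $\int_0^\infty |\sigma^{-1} G|^2 \indFn{\{t \leq \tau_K\}} dt$ is dominated by a deterministic constant almost surely. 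Theorem~\ref{thm:GirsonovRedux} then delivers mutual absolute continuity of the laws of $\{\bfU(nT)\}_n$ and $\{\tbfU(nT)\}_n$ on the full infinite-time pathspace $H^\N$ (for any fixed sampling time $T > 0$), producing the required element of $\tilde{\mathcal{C}}(\delta_{\bfU_0} P^\N, \delta_{\tbfU_0} P^\N)$.

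The heart of the argument is then the choice of parameters $N, \lambda, K$ ensuring $\bfU(t) - \tbfU(t) \to 0$ on a set of positive probability. Setting $\bfV = \bfU - \tbfU$ and carrying out an $L^2$ energy estimate on the difference equation \eqref{eq:diff:eq:1}, the only nontrivial term is $\int \bfV \cdot \nabla \bfU \cdot \bfV\, dx$, which I would bound by $C |\bfV| \|\bfV\| \|\bfU\|$ and absorb into $\nu \|\bfV\|^2$ via Young's inequality. Combining with the Poincar\'e bound \eqref{eq:gen:poincare} and the choice $\lambda = \nu \lambda_N$ yields an inequality of the form $\frac{d}{dt}|\bfV|^2 \leq (-\nu \lambda_N + C_1 \|\bfU\|^2) |\bfV|^2$ valid on $[0,\tau_K]$. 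Invoking \eqref{eq:exp:mg:est}, there is a set $E_R$ of positive probability on which $\int_0^t \|\bfU\|^2 ds$ grows with a deterministic linear slope, so choosing $N$ large enough to validate \eqref{eq:the:lamb:choice} produces exponential decay of $|\bfV(t)|^2$ at rate $\lambda/2$ on $E_R \cap \{t \leq \tau_K\}$.

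The step I expect to be the main obstacle is closing the feedback loop on $\tau_K$: the decay above holds only up to $\tau_K$, whereas what is needed is $\tau_K = \infty$ on all of $E_R$. The crucial observation is that the bound on $|\bfV|^2$ obtained above is independent of $K$, so integrating yields $\int_0^{\tau_K} |P_N \bfV|^2 ds$ bounded by a deterministic constant $K_*$ depending only on $R, \bfU_0, \tbfU_0, \nu, |\sigma|$, and $f$. Choosing $K > K_*$ forces the threshold in the definition of $\tau_K$ never to be reached, so $\tau_K = \infty$ on $E_R$, the decay is global there, and Corollary~\ref{cor:precise:app} closes the uniqueness argument.
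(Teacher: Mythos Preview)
Your proposal is correct and follows essentially the same approach as the paper: the same feedback control $\lambda P_N(\bfU-\tbfU)$ with $\lambda=\nu\lambda_N$, the same $L^2$ energy estimate on $\bfV$ combined with the Poincar\'e splitting \eqref{eq:inv:pon:app}, the same use of the exponential-martingale set $E_R$ to control $\int_0^t\|\bfU\|^2\,ds$, and the same $K$-independent decay bound to force $\tau_K=\infty$ on $E_R$. Your identification of the $\tau_K$ feedback loop as the crux and its resolution via the $K$-independence of the constant $C_1$ matches the paper exactly.
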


\begin{remark}
  It is worth emphasizing here that the above analysis shows that unique ergodicity results can be easily obtained
  in the presence of a deterministic forcing.    This observation applies to each
  of the examples considered below, but we omit its explicit inclusion for brevity
  and clarity of presentation.  Of course, the addition of a body
  forcing $\mathbf{f}$ in the hypo-elliptic setting can bring extra
  complications, primarily in proving topological irreducibility which
  is often required to prove unique ergodicity.
\end{remark}

\subsection{2D Hydrostatic Navier-Stokes Equations}\label{sec:SPE}
We next consider a stochastic version of the 2D Hydrostatic Naver-Stokes equations
\begin{align}
   &d u + (u \partial_x u + w \partial_z u
                 + \partial_x p
                 - \nu \Delta u)dt  =
                  \sum_{k=1}^d \sigma_k dW^k
      \label{eq:PEFromalEQ1Momentum}\\
    & \partial_z p = 0
      \label{eq:Hydrostatic:pres}\\
    &\partial_x u + \partial_z w  = 0,
      \label{eq:PEFromalEQ1DivFree}
\end{align}
for an unknown velocity field $(u, w)$ and pressure $p$ evolving
on the domain $\DD = (0,L) \times (-h, 0)$.
The boundary $\partial \DD$ is decomposed into its vertical sides $\Gamma_v = [0,L] \times \{0,-h\}$ and lateral sides $\Gamma_l = \{0 ,L\} \times [-h, 0]$,
 and we impose the boundary conditions
\begin{align}
  u = 0  \textrm{ on } \Gamma_l,
  \quad \partial_ zu = w = 0  \textrm{ on } \Gamma_v. \label{eq:HNSE:bcs}
\end{align}
The system is driven by a collection of independent Brownian motions $(W^1,\ldots,W^d)$ acting in directions $\sigma_k\in L^2(\DD)$ to be specified below.

The hydrostatic Navier-Stokes equations serve as a simple mathematical model which
maintains some of the crucial anisotropic structure present in the more involved Primitive
equations of the oceans and atmosphere.  This latter system forms the numerical core of sophisticated general circulation
models used in climate and weather prediction \cite{pedlosky2013,Trenberth1992}.  The Primitive equations
have been studied extensively in the mathematics literature in both deterministic \cite{LionsTemamWang1,LionsTemamWang2,LionsTemamWang3,CaoTiti2007,
Kobelkov2007, ZianeKukavica, PetcuTemamZiane2008}
and stochastic \cite{EwaldPetcuTemam, GlattHoltzTemam1,GlattHoltzTemam2, DebusscheGlattHoltzTemam1,DebusscheGlattHoltzTemamZiane1,
GlattHoltzTemamWang2013, GlattHoltzKukavicaVicolZiane2014} settings.

Note that, in contrast to the Navier-Stokes equations, global existence of strong solutions to the Primitive equations
has been proven in 3D \cite{CaoTiti2007,Kobelkov2007, ZianeKukavica}, but the uniqueness of weak solutions in 2-D remains an
outstanding open problem.  Indeed, for the hydrostatic Navier-Stokes equations, we rely on $H^1$ well-posedness results
\cite{GlattholtzZiane2008,GlattHoltzTemam2} to provide suitable Markovian dynamics associated to \eqref{eq:PEFromalEQ1Momentum}--\eqref{eq:PEFromalEQ1DivFree}.  The existence of invariant measures follows from $H^2$-moment bounds (see \cite{GlattHoltzKukavicaVicolZiane2014} and
 \eqref{eq:H2:bound} below) and the Krylov-Bogoliubov Theorem.  For uniqueness of the invariant measure we invoke another asymptotic coupling argument.
In comparison to the previous example, this argument will invoke the flexibility of Corollary \ref{cor:precise:app}
by proving convergence in the (weaker) $L^2$ topology. The more involved cases of other
boundary conditions, couplings with proxies for density (temperature and salinity), and three space dimensions,
will be pursued in future work.

\subsubsection{The Mathematical Setting}
We begin with some observations about the structure of \eqref{eq:PEFromalEQ1Momentum}--\eqref{eq:PEFromalEQ1DivFree}
when subject to \eqref{eq:HNSE:bcs}.  Firstly notice that, in view of \eqref{eq:PEFromalEQ1DivFree} and \eqref{eq:HNSE:bcs}:
$\int^0_{-h} \partial_x u(x,z) dz = - \int_{-h}^0 \partial_z w(x,z) dz = 0$ for $x \in [0,L]$. It follows that
\begin{align*}
  \int^0_{-h} u(x,z) dz \equiv 0.
\end{align*}
The divergence free condition \eqref{eq:PEFromalEQ1DivFree} coupled with \eqref{eq:HNSE:bcs}
also allows us to write $w$ as a functional of $u$, namely $w(x, z)  =-  \int_{-h}^z \partial_x u(x,\bar{z}) d\bar{z}$.
In the geophysical literature $w$ is referred to as a `diagnostic variable' and we define
\begin{align}
  \wdiag(u)(x,z) = \int_{-h}^z \partial_x u(x,\bar{z}) d\bar{z}
  \label{eq:w:diag:def}
\end{align}

Consider the spaces
\begin{align*}
H = \left\{u\in L^2(\DD):\int_{-h}^{0}u dz \equiv 0\right\} \quad\text{and}\quad V = \left\{u\in H^1(\DD):\int_{-h}^{0}u dz \equiv 0, u|_{\Gamma_l}=0\right\}
\end{align*}
and the projection operator $P_H:L^2(\DD)\rightarrow H$,
\begin{align*}
P_{H}(v)=v -\frac{1}{h}\int_{-h}^{0}v dz.
\end{align*}
As in the previous example we use $| \cdot |$ and $\| \cdot \|$ to denote the $L^2$ and $H^1$ respectively.
We define $A = -P_H \Delta$, and identify its domain as
\begin{align*}
D(A)=\left\{u\in H^2(\DD):\int_{-h}^{0}u dz \equiv 0, u|_{\Gamma_l}=0, \pd{z}u|_{\Gamma_v}=0 \right\}.
\end{align*}
Then for fixed $u_0\in V$, and $\sigma_k \in D(A)$ with finite $d$ (or sufficently fast decay in $\|\sigma_k \|_{H^2}$), the system \eqref{eq:PEFromalEQ1Momentum}--\eqref{eq:HNSE:bcs} possesses a unique solution
\begin{align*}
  u(\cdot) = u(\cdot,u_0)\in L^2(\Omega;C([0,\infty);V) \cap L^2_{loc}([0,\infty); D(A))),
\end{align*}
which depends continuously on $u_0\in V$; see \cite{GlattholtzZiane2008,GlattHoltzTemam2}.
Moreover, the dynamics of \eqref{eq:PEFromalEQ1Momentum}--\eqref{eq:HNSE:bcs}
generate a Feller Markov semigroup $\{P_{t}\}_{t\geq 0}$ on $C_b(V)$.

\subsubsection{A Priori Estimates}

We proceed to establish some estimates on solutions needed for existence and uniqueness of
invariant measures.  The energy estimate is standard.  In view of \eqref{eq:PEFromalEQ1DivFree} the pressure and nonlinear
terms drop and It\={o}'s lemma gives
\begin{align}
  d | u| ^2 + 2\nu \|u\|^2 dt =  |\sigma|^2 dt + \langle u, \sigma \rangle dW.
  \label{eq:energy:prob:bnds:hnse:0}
\end{align}
From the exponential martingale bound, cf. \eqref{eq:exp:mart}, we infer that
\begin{align}
  \Prb \Big(  \sup_{t \geq 0} \   |u(t)|^2 + \nu \int_0^t \| u\|^2 ds  - |\sigma|^2 t - |u_0|^2  \, >\, R  \Big) \leq e^{-\gamma R},
  \label{eq:energy:prob:bnds:hnse}
\end{align}
for every $R > 0$ where $\gamma = \gamma(\nu, |\sigma|^2)$ does not depend on $R$ or the number of forced modes.

We will also require bounds on $\|\pd{z}u\|$, and appeal to the
`vorticity form' of \eqref{eq:PEFromalEQ1Momentum}--\eqref{eq:HNSE:bcs} defined by taking $\partial_z$ of \eqref{eq:PEFromalEQ1Momentum}.
In view of \eqref{eq:Hydrostatic:pres} the pressure $p$ is independent of $z$, and we obtain
\begin{align}
    &d \partial_z u + \partial_z(u \partial_x u + w \partial_z u) dt
                 - \nu \Delta \partial_z udt  =
                  \sum_{k=1}^N \partial_z \sigma_k dW^k.
                  \label{eq:vort:form:Hydro:stat:NSE}
\end{align}
Notice that in view of the boundary conditions \eqref{eq:HNSE:bcs}, $- \int \Delta \partial_z u \partial_z u dx dz =  \| \partial_z u \|^2$, and
moreover we have the cancelation
\begin{align}
   \int\partial_z(u \partial_x u + w \partial_z u) \partial_z u dx dz
   =&    \int( \partial_z u \partial_x u + u \partial_{xz} u +  \partial_z w \partial_z u + w \partial_{zz} u) \partial_z u dx dz \notag \\
   =& \frac{1}{2} \int (  \partial_x u(\partial_z u)^2 +  \partial_z w (\partial_z u)^2 ) dx dz = 0. \label{eq:exact:cancel:pdz:hnse}
\end{align}
Combining these observations we obtain
\begin{align}
      d |\partial_z u|^2 +  2\nu \|\partial_z u\|^2 dt  =  | \partial_z \sigma|^2 +
                  2\langle \partial_z \sigma, \partial_z u \rangle dW,
                  \label{eq:vort:form:Hydro:stat:NSE:energy}
\end{align}
and hence, for every $R > 0$,
\begin{align}
  \Prb \Big(  \sup_{t \geq 0} \   | \partial_z u(t)|^2 + \nu \int_0^t
  \|\partial_z u\|^2 ds  - |\partial_z \sigma|^2 t - |\partial_z
  u_0|^2   \,> \, R  \Big) \leq e^{-\gamma R},
  \label{eq:energy:prob:bnds:hnse:2}
\end{align}
for some $\gamma = \gamma(\nu, |\partial \sigma|) >0$ independent of $R$.

\subsubsection{Existence of Invariant States}

We can now prove existence of invariant measures for \eqref{eq:PEFromalEQ1Momentum}--\eqref{eq:PEFromalEQ1DivFree}
by considering the evolution equation for $\|u\|^2$.   Here we follow an approach similar to \cite{GlattHoltzKukavicaVicolZiane2014}.  By applying the It\={o} lemma to a Galerkin truncation of \eqref{eq:PEFromalEQ1Momentum}--\eqref{eq:HNSE:bcs},
and passing to a limit, we obtain
\begin{align}
 d \| u\|^2  + \nu \| u \|^2_{H^2} dt   \leq C( |\partial_x u|^4 + |\partial_x u|^2 |\partial_z u | \| \partial_z u \|) + \| \sigma\|^2 dt
                 + 2\langle  \sigma,  u \rangle_{H^1} dW.
                 \label{eq:H1:evolution:HNSE}
\end{align}
Here we have used the anisotropic estimate
\begin{align}
  \Big| \int_0^L\int_{-h}^0 \wdiag(v) \partial_z u_1 u_2 dz dx\Big|
  \leq& h^{1/2} \int_0^L  | \partial_x v |_{L^2_z} |\partial_z u_1|_{L^2_z}  | u_2 |_{L^2_z} dx
  \leq h^{1/2}  |u_2| |  \partial_x v |  \Big( \sup_{x \in [0,L]}  \int_{-h}^0 (\partial_z u_1)^2 dz \Big)^{1/2}
  \notag\\
  \leq& h^{1/2}  |u_2| | \partial_x v | \! \Big( \sup_{x \in [0,L]} \int_0^x \!\! \partial_{\bar{x}} \! \int_{-h}^0 (\partial_z u_1)^2 dz  d\bar{x}\Big)^{1/2}
  \notag\\
  \leq& 2h^{1/2} |u_2| | \partial_x v | |\partial_z u_1 |^{1/2} \| \partial_z u_1\|^{1/2}
  \label{eq:aniso:est:ex:1}
\end{align}
for all suitably regular $v, u_1, u_2$.  With \eqref{eq:aniso:est:ex:1} and the Sobolev embedding of $H^{1/3}$ into $L^3$
in dimension $2$ we infer that
\begin{align*}
 \left| \int (u \partial_x u + w \partial_z u) \partial_{xx} u dx dz \right|
 &\leq C( |\partial_x u|_{L^3}^3 + |\partial_x u| \|\partial_x u \| |\partial_z u |^{1/2} \| \partial_z u \|^{1/2})\\
 &\leq C( |\partial_x u|^2 \| \partial_x u\| + |\partial_x u| \|\partial_x u \| |\partial_z u |^{1/2} \| \partial_z u \|^{1/2})
 \\
 &\leq \nu \|u\|^2_{H^2} + C(|\partial_x u|^4 + |\partial_x u|^2|\partial_z u | \| \partial_z u \|),
\end{align*}
and from here the derivation of \eqref{eq:H1:evolution:HNSE} is straightforward.
From \eqref{eq:H1:evolution:HNSE} we now compute $\log(1 + \| u\|^2)$ and observe
\begin{align*}
  d \log(1 + \| u\|^2)  + \nu  \frac{\| u \|^2_{H^2}}{1 + \| u\|^2}dt \leq C( \| u\|^2 + |\partial_z u | \| \partial_z u \|) dt
  + \| \sigma\|^2 dt
                 + 2\frac{\langle  \sigma,  u \rangle_{H^1}}{1 + \| u\|^2} dW.
\end{align*}
Hence from this bound and  \eqref{eq:energy:prob:bnds:hnse:0}, \eqref{eq:vort:form:Hydro:stat:NSE:energy}
we infer
\begin{align}
\label{eq:H2:bound}
  \int_0^T  \E \| u  \|_{H^2} ds 
  \leq  \frac{1}{2} \E \int_0^T \Big( 1 +  \| u\|^2 + \frac{ \| u  \|_{H^2}^2}{1 + \| u\|^2} \Big)ds
  \leq C (T + 1),
\end{align}
for a constant $C = C(\|u_0\|^2, \|\sigma\|^2, \nu)$ independent of $T > 0$.  The existence of invariant measures associated to \eqref{eq:PEFromalEQ1Momentum}--\eqref{eq:PEFromalEQ1DivFree} now follows by applying the Krylov-Bogolyubov Theorem.

\subsubsection{Asymptotic Coupling Arguments}
Similar to the previous example and again following Section~\ref{sec:recipeC}, we fix any $u_0, \tilde{u}_0 \in V$
and consider $u$ a solution of \eqref{eq:PEFromalEQ1Momentum}--\eqref{eq:PEFromalEQ1DivFree} starting from
$u_0$ and $\tilde{u}$ solving the same system with an additional control $G$ given as
\begin{align*}
  \lambda \indFn{\{\tau_K > t\}} P_N (u - \tilde{u})  dt, \quad \textrm{ with } \quad
     \tau_K := \inf_{t \geq 0} \left\{ \int_0^t \|P_N (u - \tilde{u})\|^2  ds \geq K \right\},
\end{align*}
and starting from $\tilde{u}_0$; cf. \eqref{eq:nse:shift}.
Once again, the parameters $\lambda , K > 0$ and $N$ will be specified below.
As above $\tilde{u}$ is subject to a  Girsonov shift of the form $\sigma^{-1}G$
and Theorem~\ref{thm:GirsonovRedux} applies.

Subtracting $\tilde{u}$ from $u$ and taking
$v = u - \tilde{u}$, $q = p - \tilde{p}$ we obtain
\begin{align*}
  \partial_t v - \nu \Delta v + \lambda P_N v dt  = -
                 \tilde{u} \partial_x v - \wdiag(\tilde{u}) \partial_z v
                 - v \partial_x u - \wdiag(v) \partial_z u
                 - \partial_x q.
\end{align*}
It follows, as in \eqref{eq:inv:pon:app} for suitably large $N$, that on $[0, \tau_K]$
\begin{align}
  \frac{1}{2} \frac{d}{dt} | v |^2 + \frac{\nu}{2} \| v\|^2 + \lambda |v|^2
  \leq \left| \int (v \partial_x u + \wdiag(v) \partial_z u) v dx dz \right|
  \leq C  |v| \| v\| (\|u\| + | \partial_z u|^{1/2}\| \partial_z u\|^{1/2}),
  \label{eq:diff:eq:diff:HNSE}
\end{align}
where we have applied the anisotropic estimate \eqref{eq:aniso:est:ex:1} in the last line.
On the interval $[0, \tau_K]$ this gives
\begin{align}
   | v(t) |^2  \leq \exp\left( 2 \lambda t - C\int_0^t  (\|u\|^2 + | \partial_z u| \| \partial_z u\| )ds \right) |v(0)|^2,
   \label{eq:hnse:decay:setup}
\end{align}
where, to emphasize, the constant $C$ is independent of $K$.
Thus, following the strategy of Section \ref{sec:NSE:Channel} above, we can combine \eqref{eq:energy:prob:bnds:hnse}, \eqref{eq:energy:prob:bnds:hnse:2} and
\eqref{eq:hnse:decay:setup} to apply Corollary~\ref{cor:precise:app} with $H_0=H^1(\DD)$ and $\tilde{\rho}$ corresponding to the $L^2(\DD)$-topology.
 We have proven the following result:
\begin{proposition}
  \label{prop:hnse:unique}
  For every $\nu > 0$ there exists $N = N(\nu, | \sigma |^2, |\partial_z \sigma|^2)$ such
  that if $\mathrm{Range}(\sigma) \supset V_N=P_N V$ then
  \eqref{eq:PEFromalEQ1Momentum}--\eqref{eq:PEFromalEQ1DivFree} has a unique ergodic invariant measure.
\end{proposition}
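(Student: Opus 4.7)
The plan is to close the argument already assembled before the statement. Existence of an invariant measure follows from the $H^2$-moment bound \eqref{eq:H2:bound} via the Krylov-Bogolyubov theorem (time-averaged laws are tight in $V$ and the semigroup is Feller), so the substantive work is uniqueness, which I will derive from Corollary~\ref{cor:precise:app} in the $L^2$-topology using the coupling recipe of Section~\ref{sec:recipeC}.

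Fix $u_0, \tilde u_0 \in V$ and consider the coupled pair $(u,\tilde u)$ introduced before \eqref{eq:diff:eq:diff:HNSE}, with $\lambda$ taken proportional to $\nu \lambda_N$ so that $\tfrac{\nu}{2}\|v\|^2 + \lambda|P_N v|^2$ dominates $\lambda |v|^2$ via the Poincar\'e-type argument behind \eqref{eq:inv:pon:app}. For a fixed time step $T>0$, let $\mathfrak{m}$, $\mathfrak{n}$, $\Gamma$ be the laws on $V^{\N}$ of the $T$-samples of $u(\cdot,u_0)$, of $\tilde u(\cdot,\tilde u_0)$, and of the joint process $(u,\tilde u)$. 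Because $P_N V$ is finite dimensional and $\mathrm{Range}(\sigma) \supset V_N = P_N V$, the Girsanov shift $h(t) = \lambda \indFn{\{\tau_K>t\}} \sigma^{-1} P_N (u-\tilde u)$ satisfies $\int_0^\infty |h|^2 ds \leq C(\lambda,K,N,\sigma)$ deterministically by the very definition of $\tau_K$, so \eqref{eq:st:nov:cond} holds and Theorem~\ref{thm:GirsonovRedux} gives $\mathfrak{n}\ll\mathfrak{m}$ on the infinite horizon, placing $\Gamma \in \tilde{\mathcal{C}}(\delta_{u_0}P^{\N},\delta_{\tilde u_0}P^{\N})$ for every choice of $\lambda,K,N$.

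It remains to produce a positive-probability event on which $|u(t)-\tilde u(t)|\to 0$ in $L^2$. Combining the exponential martingale bounds \eqref{eq:energy:prob:bnds:hnse} and \eqref{eq:energy:prob:bnds:hnse:2} (with Cauchy-Schwarz on the cross term $|\partial_z u|\|\partial_z u\|$) yields, for $R = R(\nu,|\sigma|,|\partial_z\sigma|,u_0)$ sufficiently large, an event $E_R$ of positive probability on which
\begin{align*}
\int_0^t \bigl( \|u\|^2 + |\partial_z u| \|\partial_z u\| \bigr) ds \leq \alpha t + \beta,
\end{align*}
with slope $\alpha$ depending only on $\nu,|\sigma|,|\partial_z\sigma|$. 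Inserting this into \eqref{eq:hnse:decay:setup} and choosing $N=N(\nu,|\sigma|^2,|\partial_z\sigma|^2)$ so that $\lambda$ strictly dominates $C\alpha/2$ produces $|v(t)|^2 \leq e^\beta e^{-\eta t} |v(0)|^2$ for $t\leq \tau_K$ on $E_R$. Choosing $K$ large enough (depending on $u_0,\tilde u_0,R$) then forces $\tau_K=\infty$ on $E_R$, whence $|u(t)-\tilde u(t)|\to 0$ on $E_R$ and $\Gamma(D_{\tilde\rho})>0$ for $\tilde\rho(a,b):=|a-b|$. Corollary~\ref{cor:precise:app} with $H=V$, $H_0=V$ and $\tilde\rho$ the $L^2$-distance now closes the argument: $D_{\tilde\rho}$ is measurable since $\tilde\rho$ is continuous on $V$, and a mollification argument (cf.\ Remark~\ref{rm:howWeUseIt:alt}) shows $\mathcal{G}_{\tilde\rho}$ determines measures on $V$.

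The main obstacle I anticipate is the $|\partial_z u|\|\partial_z u\|$ term generated by the anisotropic estimate \eqref{eq:aniso:est:ex:1}: ordinary $L^2$ control of $u$ alone cannot force this term to grow only linearly in $t$ on a set of positive probability, which is precisely why the separate identity \eqref{eq:vort:form:Hydro:stat:NSE:energy} for $\partial_z u$ (enabled by the cancellation \eqref{eq:exact:cancel:pdz:hnse}) was established, why the threshold $N$ in the statement depends on $|\partial_z\sigma|^2$ in addition to $|\sigma|^2$, and why the phase space is taken to be $V$ rather than $H$.
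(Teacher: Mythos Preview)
Your proposal is correct and follows essentially the same route as the paper: set up the nudged system with control $\lambda\indFn{\{\tau_K>t\}}P_N(u-\tilde u)$, invoke Theorem~\ref{thm:GirsonovRedux} via the stopping time to land in $\tilde{\mathcal{C}}(\delta_{u_0}P^{\N},\delta_{\tilde u_0}P^{\N})$, and then combine \eqref{eq:hnse:decay:setup} with the two exponential martingale bounds \eqref{eq:energy:prob:bnds:hnse}, \eqref{eq:energy:prob:bnds:hnse:2} to obtain decay of $|v|$ on a set of positive probability and apply Corollary~\ref{cor:precise:app} in the $L^2$ topology on $V$. The paper simply says ``following the strategy of Section~\ref{sec:NSE:Channel}'' at the last step, whereas you spell out explicitly that the cross term $|\partial_z u|\,\|\partial_z u\|$ is handled by $ab\le\tfrac12(a^2+b^2)$ together with $|\partial_z u|^2\le\|u\|^2$, so that the growth rate in the Gr\"onwall exponent depends only on $\nu,|\sigma|^2,|\partial_z\sigma|^2$; this is the intended completion and your identification of why the $\partial_z u$ energy identity is needed is exactly right.
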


\subsection{The Fractionally Dissipative Euler Model}
\label{sec:Frac:disp:Euler}

Our next example considers the fractionally dissipative Euler equations introduced in \cite{ConstantinGlattHoltzVicol2013}.
This system takes the form
\begin{align}
  d \Vort + (\Lambda^\gamma \Vort + \bfU \cdot \nabla \Vort)dt = \sum_{k =1}^d \sigma_k dW^k, \quad \bfU = \mathcal{K} \ast \Vort,
  \label{eq:frac:SNSE}
\end{align}
for any unknown vorticity field $\Vort$.  Here $\Lambda^\gamma = (- \Delta)^{\gamma/2}$ is the fractional Laplacian which we consider for
\emph{any}  $\gamma \in (0,2]$,
$\mathcal{K}$ is the Biot-Savart kernel, so that $\nabla^\perp \cdot \bfU = \Vort$ and $\nabla \cdot \bfU = 0$, and we suppose that \eqref{eq:frac:SNSE}
is posed on the periodic box $\TT^2 = [-\pi , \pi]^2$.  Conditions on the forced directions $\sigma_k$ will be specified below.

In \cite{ConstantinGlattHoltzVicol2013} it was demonstrated that with
``effectively elliptic'' forcing, the system \eqref{eq:frac:SNSE}
possesses a unique ergodic invariant measure, and in the course of the proof, significant effort was made to establish arbitrary order polynomial moment bounds in high order Sobolev spaces ($H^r$ for any $r > 2$).  These bounds are interesting and hold significance for questions regarding the rate of convergence to the invariant measure.  However, we
show here that much less effort is required if one simply wishes to prove existence and uniqueness of the invariant measure.
As in the last example, the argument is significantly simplified by invoking Corollary \ref{cor:precise:app} and proving convergence in the $L^2$-topology.

\subsubsection{Mathematical Preliminaries}
We consider \eqref{eq:frac:SNSE} in its velocity formulation
\begin{align}
  d \bfU + (\Lambda^\gamma \bfU + \bfU \cdot \nabla \bfU + \nabla\pres)dt = \sum_{k =1}^d \rho_k dW^k, \quad \nabla \cdot \bfU = 0.
  \label{eq:frac:SNSE:vel}
\end{align}
Here the unknowns are the velocity field $\bfU = (u_1, u_2)$ and pressure $\pres$ both posed on the periodic box $\TT^2$.  One
may show using a Galerkin regularization argument, that for any $r >2$ and $\bfU_0\in H^r$, there exists a unique $\bfU = \bfU(\spcdot, \bfU_0)$ solving
\eqref{eq:frac:SNSE:vel}, with $\bfU \in C([0,\infty), H^r)$.  As in \cite{ConstantinGlattHoltzVicol2013} we may infer that for any such $r >2$ and any $t > 0$, $\bfU(t, \bfU_0^n)  \to \bfU(t, \bfU_0)$
almost surely in $H^r$ whenever $\bfU_0^n \to \bfU_0$ in $H^r$.  It follows that the
transition function $P_t(\bfU_0, A) = \Prb( \bfU(t, \bfU_0) \in A)$, $\bfU_0 \in H^r$,  $A \in \mathcal{B}(H^r)$
defines a Feller Markovian semigroup.

To prove the existence of an invariant measure for \eqref{eq:frac:SNSE} we argue as follows.  Applying $\Lambda^r$ for any $r > 2$ to \eqref{eq:frac:SNSE:vel} and then integrating
we find that
\begin{align}
  d \| \bfU \|_{H^r}^2 + 2\| \bfU \|_{H^{r+\frac{\gamma}{2}}}^2 dt = - 2\int_{\TT^2} ( \Lambda^r( \bfU \cdot \nabla \bfU) - \bfU \cdot \nabla \Lambda^r \bfU) \Lambda^r \bfU dx
      + \| \rho \|_{H^r}^2 dt + 2\langle \rho, \bfU \rangle_{H^r} dW,
      \label{eq:Hr:evol}
\end{align}
where we have used that $\bfU$ is divergence free to eliminate the pressure and
rewrite the nonlinear terms.   In order to estimate these nonlinear terms we recall the Kenig-Ponce-Vega commutator estimate
\begin{align}
  \| \Lambda^{s} (f \cdot \nabla g) - f \cdot \nabla \Lambda^{s} g\|_{L^p} \leq C( \| \Lambda^{s} f\|_{L^{q_1}}  \| \nabla g\|_{L^{r_1}}  +\| \nabla f\|_{L^{q_2}}  \| \Lambda^{s} g\|_{L^{r_2}}),
  \label{eq:KPV:comm:est}
\end{align}
valid for any suitably regular $f,g$,  $s > 1$ and any trios $p, q_i, r_i$ with $1 < p, q_i, r_i < \infty$ and $p^{-1} = q^{-1}_j + r^{-1}_j$, $j =1,2$; see \cite{MuscaluSchlag13}.  With this bound we obtain
\begin{align}
 \left| \int_{\TT^2} ( \Lambda^r( \bfU \cdot \nabla \bfU) - \bfU \cdot \nabla \Lambda^r \bfU) \Lambda^r \bfU dx \right|
      \leq&C \| \nabla \bfU\|_{L^{4/\delta}}  \| \Lambda^{r} \bfU\|_{L^{4/(2- \delta)}}  |\Lambda^r \bfU |,
        \label{eq:nlt:frac:e:0}
\end{align}
valid for any $\delta \in [0,1)$.     Next recall the Gagliardo-Nirenberg interpolation inequality
\begin{align*}
  \| \Lambda^{\alpha} f\|_{L^p} \leq C \| f \|^{\theta}_{L^q}  \| \Lambda^{\beta} f \|^{1 - \theta}_{L^m},
\end{align*}
which holds for any $1 < p, q, m \leq \infty$, $0 < \alpha < \beta < \infty$ such that
\begin{align*}
  \frac{1}{p} = \frac{\theta}{q} + \frac{1 - \theta}{m}, \quad \textrm{ where } \quad \theta = 1 - \frac{\alpha}{\beta}.
\end{align*}
Taking $p = 4/(2- \delta)$, $m = 2$, $\alpha = r -1$ and $\beta = r -1 + \frac{\gamma}{2}$ in this inequality we
infer that for every $\delta <  \frac{2 \gamma}{\gamma + 2r -2},$
\begin{align*}
   \| \Lambda^{r} \bfU\|_{L^{4/(2- \delta)}}  \leq C \| \nabla \bfU \|_{L^{q}}^{\frac{\gamma}{2r -2 + \gamma}}|  \Lambda^{r+\frac{\gamma}{2}} \bfU |^{\frac{2r - 2}{2r-2+\gamma}}
   \quad \textrm{ with } \quad q = \frac{4\gamma}{ 2\gamma - \delta(\gamma + 2r - 2)}.
\end{align*}
Hence by choosing $\delta = \frac{\gamma}{\gamma + 2r -2}$ and combining this bound with \eqref{eq:nlt:frac:e:0}
we find
\begin{align}
 \left| \int_{\TT^2} ( \Lambda^r( \bfU \cdot \nabla \bfU) - \bfU \cdot \nabla \Lambda^r \bfU) \Lambda^r \bfU dx \right|
      \leq& C \| \Vort \|_{L^{4/\delta}} \| \Vort \|_{L^{4}}^{\frac{\gamma}{2r-2 + \gamma}} \| \bfU\|_{H^{r + \frac{\gamma}{2}}}^{\frac{2r - 2}{2r-2+\gamma}} \| \bfU \|_{H^r}
      \notag\\
      \leq& C \| \Vort \|_{L^{4/\delta}}^{2} \| \bfU \|_{H^r}^{2\left(\frac{2r-2 + \gamma }{2r-2 + 2\gamma }\right)}
      + \| \bfU\|_{H^{r + \frac{\gamma}{2}}}^2.
      \label{eq:nlt:frac:e:1}
\end{align}
With this estimate in mind we now compute a differential for $(1 + \| \bfU \|_{H^r}^2)^\kappa$ with $\kappa \in (0,1)$.   This yields
\begin{align*}
 d (1 + \| \bfU \|_{H^r}^2)^\kappa + \kappa \| \bfU \|_{H^{r+\frac{\gamma}{2}}}^2 &(1 + \| \bfU \|_{H^r}^2)^{\kappa-1} dt
     \leq  C \| \Vort \|_{L^{4/\delta}}^{2} \| \bfU \|_{H^r}^{2\left(\frac{2r-2 + \gamma }{2r-2 + 2\gamma }\right)} (1 + \| \bfU \|_{H^r}^2)^{\kappa-1} dt\\
      &+ \kappa \| \rho \|_{H^r}^2 (1 + \| \bfU \|_{H^r}^2)^{\kappa-1} dt + 2\kappa\langle \rho, \bfU \rangle_{H^r} (1 + \| \bfU \|_{H^r}^2)^{\kappa-1} dW.
\end{align*}
By taking $\kappa = \frac{\gamma}{2r -2 + 2 \gamma}$ we infer
\begin{align}
  \int_0^t \E  \| \bfU \|_{H^{r+\frac{\gamma}{2}}}^\kappa ds \leq C \left( (1 + \| \bfU_0 \|_{H^r}^2)^\kappa +\int_0^t  (\| \rho \|_{H^r}^2  +  \E\| \Vort \|_{L^{4/\delta}}^{2} + 1) ds\right),
  \label{eq:KB:bnd:frack:oiler}
\end{align}
for each $t \geq 0$ where the constant $C = C(\gamma,r)$ is independent of $\bfU_0$ and $t$.  The existence of an invariant measure
now follows once we establish a suitable bound on $\Vort$ in $L^p(\TT^2)$ for any $p \geq 2$.

To this end, we next observe that from \eqref{eq:frac:SNSE} we have for any $p \geq 2$,
\begin{align*}
  d \| \Vort \|_{L^p}^p + p \int_{\TT^2} \Lambda^\gamma \Vort \Vort^{p-1} dx dt
  	= \frac{p(p-1)}{2} \sum_{k =1}^d \int \sigma_k^2 \Vort^{p-2} dx dt
	+ p \sum_{k =1}^d \int \sigma_k \Vort^{p-1} dx dW^k.
\end{align*}
Recalling the nonlinear Poincar\'e inequality from \cite{ConstantinGlattHoltzVicol2013}
\begin{align*}
  p \int_{\TT^2} \Lambda^\gamma \Vort \Vort^{p-1} dx \geq \frac{1}{C_\gamma} \| \Vort \|_{L^p}^p,
\end{align*}
where the constant $C_\gamma$ depends only on $\gamma > 0$, we infer
\begin{align}
  d \| \Vort \|_{L^p}^p + \frac{1}{C_\gamma} \| \Vort \|_{L^p}^p dt
        \leq \frac{p(p-1)}{2} \| \sigma \|_{L^p}^2 \| \Vort \|_{L^p}^{p-2} dt
	+ p \sum_{k =1}^d \int \sigma_k \Vort^{p-1} dx dW^k.
	\label{eq:Lp:bnd:frak:oiler}
\end{align}
The existence of an (ergodic) invariant measure follows immediately by combining \eqref{eq:Lp:bnd:frak:oiler} with \eqref{eq:KB:bnd:frack:oiler}.

\subsubsection{Asymptotic Coupling Arguments}
Fix any $\bfU_0, \tbfU_0 \in H^r$ and let $\bfU = \bfU(\spcdot, \bfU_0)$ be the corresponding solution of \eqref{eq:frac:SNSE} while
we suppose that $\tbfU$ solves
\begin{align*}
  d \tbfU + (\Lambda^\gamma \tbfU - \indFn{\tau_K > t} \lambda P_N (\bfU - \tbfU) + \tbfU \cdot \nabla \tbfU + \nabla \tpres)dt = \sum_{k =1}^d \sigma_k dW,
  \quad \tbfU(0) = \tbfU_0,
\end{align*}
where
\begin{align*}
  \tau_K := \inf_{t \geq 0}  \left\{   \int_0^t |P_N  (\bfU - \tbfU)|^2 ds \geq K \right\}.
\end{align*}
The parameters $K, \lambda > 0$ are to be determined presently.  It is easy to see from Theorem~\ref{thm:GirsonovRedux} that, for any choice of $\lambda, K > 0$,
 the law of $\tbfU$ is absolutely continuous with
respect to the solution $\bfU( \cdot , \tbfU_0)$ of \eqref{eq:frac:SNSE} corresponding to $\tbfU_0$.     As previous examples, unique ergodicity follows
from Corollary~\ref{cor:precise:app} once we can find some  $\lambda, K > 0$ (where $K$ may depend on $\bfU_0, \tbfU_0$)
such that $\bfU(t) - \tbfU(t) \to 0$ in $L^2(\TT^2)$ on a set of non-trivial measure.

Take $\bfV = \bfU - \tbfU$ and $\dpres = \pres - \tpres$. We find
\begin{align*}
 \partial_t \bfV + \Lambda^\gamma \bfV + \indFn{\tau_K > t} \lambda P_N \bfV + \bfV \cdot \nabla \bfU + \tbfU \cdot \nabla \bfV + \nabla \dpres = 0.
\end{align*}
Hence, using that $\tilde{\bfU}$ is divergence free and the generalized Poincar\'e inequality (similarly to \eqref{eq:inv:pon:app} above) we have
\begin{align*}
  \frac{d}{dt} | \bfV |^2 + 2 \lambda | \bfV |^2  + \| \bfV\|^2_{H^{\gamma/2}}
    \leq 2 \left| \int \bfV \cdot \nabla \bfU \cdot \bfV dx \right|
\end{align*}
for every $t \in [0, \tau_K]$.  Here $\lambda = \lambda(N, \gamma)$ can be chosen as large
as desired by decreeing the space $H_N$ spanned by the forced modes
to be commensurately big.
By choosing $p =p(\gamma) >0$ sufficiently large we infer
\begin{align*}
  2 \left| \int \bfV \cdot \nabla \bfU \cdot \bfV dx \right| \leq C \| \Vort \|_{L^p} | \bfV | \| \bfV \|_{H^{\gamma/2}}
\end{align*}
and hence the bound
\begin{align*}
 \frac{d}{dt} | \bfV |^2 + (2 \lambda  - C \| \Vort \|_{L^p}^2 )| \bfV |^2
    \leq 0
\end{align*}
holds on $[0, \tau_K]$.  Gr\"onwall's inequality implies that, on this same interval $[0, \tau_K]$,
\begin{align}
  | \bfV (t) |^2
    \leq | \bfV(0)|^2 \exp\big( - 2 \lambda t + C\int_0^t  \| \Vort \|_{L^p}^2 ds \big).
        \label{eq:diff:ineq:bfV:good:int}
\end{align}
Here we again emphasize that the constant $C$ appearing in the exponential depends only
on universal quantities and in particular is independent of $K$ in the definition of $\tau_K$.

To finally infer the desired contraction from \eqref{eq:diff:ineq:bfV:good:int}
we thus need a further bound on the $L^p$ norms of $\Vort$.  For this we
compute $d (1+ \| \Vort \|_{L^p}^p)^{2/p}$ which with \eqref{eq:Lp:bnd:frak:oiler} yields
\begin{align}
  d (1+ \| \Vort \|_{L^p}^p)^{2/p} + \frac{2}{pC_\gamma} \| \Vort \|_{L^p}^2 dt
  	\leq (p-1) \| \sigma \|_{L^p}^2 dt
	+ 2 (1+ \| \Vort \|_{L^p}^p)^{\frac{2}{p}-1}   \sum_{k =1}^d \int \sigma_k \Vort^{p-1} dx dW^k.
	\label{eq:Vort:p:est:1}
\end{align}
Observe that the martingale term in the inequality above has a quadratic variation which can be estimated as
\begin{align}
4 \int_0^t(1+ \| \Vort \|_{L^p}^p)^{\frac{4}{p}-2}   \sum_{k =1}^d \Big(\int \sigma_k \Vort^{p-1} dx \Big)^2 ds
 \leq& 4 \int_0^t(1+ \| \Vort \|_{L^p}^p)^{\frac{4}{p}-2}    \Big(\int  \Big(\sum_{k =1}^d \sigma_k^2 \Big)^{1/2} \Vort^{p-1} dx \Big)^2 ds
 \notag\\
  \leq& 4 \|\sigma\|^2_{L^p} \int_0^t (1 + \| \Vort \|_{L^p}^p)^{2/p} ds.
  	\label{eq:Vort:p:est:2}
\end{align}
By now combining \eqref{eq:Vort:p:est:1}, \eqref{eq:Vort:p:est:2} we infer from exponential martingale bounds, \eqref{eq:exp:mart}, that
\begin{align}
  \Prb\left( \inf_{t \geq 0}
        \ \frac{1}{pC_\gamma} \int_0^t \| \Vort \|_{L^p}^2 dt - (p + 2^{2/p+2}) \| \sigma \|_{L^p}^2 t
     \    \geq R \right) \leq e^{-\alpha R},
        \label{eq:exp:MG:bnd:lp}
\end{align}
for every $R \geq 0$ where $\alpha = \alpha( \|\sigma\|_{L^p}, p,  \gamma)$ is independent of $R$ and
does not depend on the number of forced modes but only on the norm of $\|\sigma\|_{L^p}$.

Combining \eqref{eq:diff:ineq:bfV:good:int} and \eqref{eq:exp:MG:bnd:lp} and arguing as in the previous examples
we infer that, for an appropriate choice of $K >0$,  $|v(t)| \to 0$ on a set of non-trivial measure.
In summary we have proven the following:
\begin{proposition}
 The system \eqref{eq:frac:SNSE} possesses an ergodic invariant measure. When $N = N( \|\sigma\|_{H^r}, \gamma)$
 is sufficiently large and $\mathrm{Range}(\sigma) \supset P_N H^r$ this invariant measure is unique.
\end{proposition}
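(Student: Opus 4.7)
The strategy splits naturally into existence and uniqueness. Existence will be handled by the Krylov--Bogolyubov procedure applied to the Feller semigroup $\{P_t\}$ on $H^r$ ($r>2$), using the preparatory bounds \eqref{eq:KB:bnd:frack:oiler} and \eqref{eq:Lp:bnd:frak:oiler} to produce tight time-averaged measures. Uniqueness will follow from Corollary~\ref{cor:precise:app}, applied with $\tilde\rho$ the $L^2(\TT^2)$ metric (regarded as a distance on $H^r$), by coupling two copies of the equation via the Girsanov shift outlined in Section~\ref{sec:recipeC} and appealing to the difference-equation decay \eqref{eq:diff:ineq:bfV:good:int} together with the exponential martingale estimate \eqref{eq:exp:MG:bnd:lp}.

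For existence, I would take expectations in \eqref{eq:Lp:bnd:frak:oiler} with $p = 4/\delta$ and apply a standard Gr\"onwall argument to obtain a time-uniform bound on $\E\|\Vort(t)\|_{L^p}^p$. Feeding this into \eqref{eq:KB:bnd:frack:oiler} gives a uniform bound on $\frac{1}{T}\int_0^T \E\|\bfU(s)\|_{H^{r+\gamma/2}}^\kappa ds$. Since $H^{r+\gamma/2}$ embeds compactly in $H^r$, this implies tightness of the time-averaged measures $\nu_T(\cdot) := \frac{1}{T}\int_0^T P_s(\bfU_0,\cdot)\,ds$ on $H^r$, and Krylov--Bogolyubov produces an invariant measure; extracting an extremal point delivers an ergodic one.

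For uniqueness, fix $\bfU_0,\tbfU_0 \in H^r$ and couple $\bfU$ to the shifted process $\tbfU$ described in the excerpt, driven by the feedback $G(\bfU,\tbfU) = \lambda P_N(\bfU-\tbfU)$ and cut off at the stopping time $\tau_K$. The range hypothesis $\mathrm{Range}(\sigma) \supset P_N H^r$ together with the definition of $\tau_K$ secures the Novikov-type condition \eqref{eq:st:nov:cond}, so Theorem~\ref{thm:GirsonovRedux} identifies the sampled pathspace law of $\{(\bfU(n),\tbfU(n))\}_{n\in\N}$ as an element of $\tilde{\mathcal{C}}(\delta_{\bfU_0}P^\N, \delta_{\tbfU_0}P^\N)$. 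On the event $E_R$ supplied by \eqref{eq:exp:MG:bnd:lp} (which has positive probability for every sufficiently large $R$), inserting the corresponding control on $\int_0^t \|\Vort\|_{L^p}^2\,ds$ into \eqref{eq:diff:ineq:bfV:good:int} reduces the right-hand side to $C(R,\bfU_0,\tbfU_0) \exp\bigl(-(2\lambda - C\beta)t\bigr)$, where $\beta = pC_\gamma(p+2^{2/p+2})\|\sigma\|_{L^p}^2$. Choosing $N$ large enough that $\lambda(N,\gamma) > C\beta$, and then $K = K(\bfU_0,\tbfU_0,R)$ large enough to absorb the prefactor, forces $\tau_K = \infty$ and $|\bfV(t)| \to 0$ on $E_R$, after which Corollary~\ref{cor:precise:app} delivers uniqueness.

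The principal obstacle---identical in nature to the hydrostatic Navier--Stokes example---is that the fractional dissipation is too weak relative to the quadratic nonlinearity to support a Foias--Prodi-type contraction directly in $H^r$; one is forced to carry out the convergence analysis in the weaker $L^2$ topology, and this is precisely the flexibility Corollary~\ref{cor:precise:app} was designed to exploit. The other subtlety is the consistency of the parameter selection: $N$ must be fixed first (large enough to dominate the $L^p$-vorticity rate appearing in \eqref{eq:exp:MG:bnd:lp}), then $R$ so that $E_R$ is nontrivial, and only then $K$ to absorb the initial gap $|\bfU_0 - \tbfU_0|$; this ordering is dictated transparently by the $K$-independence of the constants in \eqref{eq:diff:ineq:bfV:good:int} and \eqref{eq:exp:MG:bnd:lp}.
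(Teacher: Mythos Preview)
Your proposal is correct and follows essentially the same approach as the paper: existence via Krylov--Bogolyubov from the moment bounds \eqref{eq:KB:bnd:frack:oiler} and \eqref{eq:Lp:bnd:frak:oiler}, and uniqueness via the asymptotic coupling of Corollary~\ref{cor:precise:app} in the $L^2$ topology, combining the contraction estimate \eqref{eq:diff:ineq:bfV:good:int} with the exponential martingale bound \eqref{eq:exp:MG:bnd:lp}. Your identification of the parameter ordering ($N$ first, then $R$, then $K$) and of the reason for working in $L^2$ rather than $H^r$ matches the paper's logic exactly.
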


\subsection{The Damped Stochastically Forced Euler-Voigt Model}
\label{sec:e:voigt}

The next system that we will consider is an inviscid `Voigt-type' regularization (see e.g.
\cite{Oskolkov1977} and further references below)  of the damped stochastic
Euler equations. This example is significant as, in contrast to the previous equations,
it illustrates a case for which the existence and uniqueness of invariant measures
can be demonstrated in the absence of a parabolic regularization mechanism.
In fact both the questions of the existence and the uniqueness of the invariant measure
leads to interesting new twists in the analysis in comparison to the previous examples.
For the question of existence we make use of an inviscid limit procedure
along with an abstract result presented in Corollary~\ref{cor:ex:c} in Appendix~\ref{sec:ex:IM} below.

The governing equations read
\begin{align}
  d \bfU + (\gamma \bfU + \bfU_\alpha \cdot \nabla \bfU_\alpha + \nabla p) dt = \sum_{k =1}^d \sigma_k dW^k,
  \quad \bfU(0) = \bfU_0,
  \label{eq:e:v:1}
\end{align}
for some $\gamma > 0$ with the unknown vector field $\bfU$ subject to the divergence-free condition $\nabla \cdot \bfU =0$
and where the non-linear terms are subject to an $\alpha$ degree
regularization
\begin{align}
  (- \Delta)^{\alpha/2} \bfU_\alpha = \Lambda^{\alpha} \bfU_\alpha = \bfU.
    \label{eq:e:v:2}
\end{align}
We suppose that \eqref{eq:e:v:1} evolves on the periodic box $\TT^n$ where $n = 2,3$.   To streamline our presentation and in view of the
fact that damping terms are more natural for two dimensional flows, our main focus
will be on the case $n = 2$.  Here the assumed degree of regularization $\alpha$  in \eqref{eq:e:v:2} is greater $2/3$.  This lower bound
is a strict inequality for the question of uniqueness.     Note however that the case $n =3$ can be addressed by a similar approach when
we suppose that $\alpha \geq 2$.  See Remark~\ref{rmk:3d:case} at the conclusion of this section for further details.

There is a vast literature around regularizations (or mollifications) of the nonlinear terms in the Navier-Stokes and Euler equations.
In fact, it is notable that such a regularization procedure was the basis for the first existence results for weak solutions dating back to the seminal work of
Leray, \cite{Leray1934}.   In the more recent literature a variety of related systems explore this theme in the context of turbulence
closure models, viscoelastic and non-newtonian fluids and a variety of other applications.  See, for example,
\cite{Oskolkov1977, FoiasHolmTiti2002, MarsdenShkoller2001,  MarsdenShkoller2003,CheskidovHolmOlsonTiti2005,
LaytonLewandowski2006, CaoLunasinTiti2006, LariosTiti2009, KalantarovTiti2009, DiMolfettaKrstlulovicBrachet2015} and
numerous containing references.

\subsubsection{A Priori Estimates}

We begin by illustrating some a-priori energy estimate for \eqref{eq:e:v:1}--\eqref{eq:e:v:2} which will guide us
in the sequel. Notice that if we apply $\Lambda^{-\alpha/2}$ to \eqref{eq:e:v:1} we obtain from the It\={o} lemma that
\begin{align}
 d \| \Lambda^{-\alpha/2} \bfU \|^2+  2\gamma  \| \Lambda^{-\alpha/2} \bfU \|^2 dt
 	= \|\Lambda^{-\alpha/2} \sigma\|^2 + 2 \langle \Lambda^{-\alpha/2} \sigma,  \Lambda^{-\alpha/2} \bfU \rangle dW,
	\label{eq:ap:1}
\end{align}
and hence exponential martingale bounds imply
\begin{align}
   \Prb\left( \sup_{t \geq 0} \Big(\| \Lambda^{-\alpha/2} \bfU (t)\|^2+  \gamma \int_0^t \| \Lambda^{-\alpha/2} \bfU \|^2 ds -
   \|\Lambda^{-\alpha/2} \sigma\|^2 t  + \|\Lambda^{-\alpha/2} \bfU_0\|^2 \Big) \geq K\right) \leq \exp(- c K),
   \label{eq:exp:mar:1}
\end{align}
for each $K > 0$ and some $c = c(\|\Lambda^{-\alpha/2} \sigma\|^2, \gamma)$ independent of $K$.

Next observe that, by taking $\xi = \mbox{curl} \,\bfU$, $\rho = \mbox{curl} \, \sigma$, we obtain the vorticity formulation of \eqref{eq:e:v:1}
\begin{align*}
  d \xi + (\gamma \xi + \bfU_\alpha \cdot \nabla \xi_\alpha - \xi_\alpha \cdot \nabla \bfU_\alpha) dt = \sum_{k =1}^N \rho_k dW^k,
\end{align*}
In $n =2$, our main concern here, the `vortex stretching term' $\xi_\alpha \cdot \nabla \bfU_\alpha$ is absent and we obtain
\begin{align}
  d \xi + (\gamma \xi + \bfU_\alpha \cdot \nabla \xi_\alpha) dt = \sum_{k =1}^N \rho_k dW^k,
  \label{eq:e:v:vort}
\end{align}
which, in this two dimensional case, implies
\begin{align}
 d \| \Lambda^{-\alpha/2} \xi \|^2+  2\gamma  \| \Lambda^{-\alpha/2} \xi \|^2 dt
 	= \|\Lambda^{-\alpha/2} \rho \|^2 + 2 \langle \Lambda^{-\alpha/2} \rho,  \Lambda^{-\alpha/2} \xi \rangle dW,
	  \label{eq:ap:2}
\end{align}
and hence yields
\begin{align}
   \Prb\left( \sup_{t \geq 0} \Big(\| \Lambda^{-\alpha/2} \xi(t)\|^2+  \gamma \int_0^t \| \Lambda^{-\alpha/2} \xi \|^2 ds -
   \|\Lambda^{-\alpha/2} \rho\|^2 t  + \|\Lambda^{-\alpha/2} \xi_0\|^2 \Big) \geq K\right) \leq \exp(- c K),
      \label{eq:exp:mar:2}
\end{align}
for each $K > 0$ and some $c = c(\|\Lambda^{-\alpha/2} \rho \|^2, \gamma)$ independent of $K$.
From \eqref{eq:ap:2} we can further prove that for $\eta = \eta(\|\Lambda^{-\alpha/2} \rho\|^2, \gamma)$
\begin{align}
   \E \exp( \eta  \| \Lambda^{-\alpha/2} \xi (t)\|^2) \leq  \exp( \eta ( \gamma^{-1}\| \Lambda^{-\alpha/2} \rho\|^2 + e^{-\gamma t /2 }  \|\Lambda^{-\alpha/2} \xi_0\|^2))
   \label{eq:exp:mar:3}
\end{align}
and we also have that
\begin{align}
   \E \exp \Big( \eta \gamma  \int_0^t \| \Lambda^{-\alpha/2} \xi (t)\|^2 ds \Big) \leq  \exp( \| \Lambda^{-\alpha/2} \rho\|^2 t +  \|\Lambda^{-\alpha/2} \xi_0\|^2).
      \label{eq:exp:mar:4}
\end{align}
Note that the constant $\eta$ appearing in \eqref{eq:exp:mar:3}, \eqref{eq:exp:mar:4} may be taken to be less than $1$.

Suppose that $\bfU$,  $\tbfU$ solve both \eqref{eq:e:v:1}--\eqref{eq:e:v:2} and take $\bfV = \bfU - \tbfU$ which satisfies
\begin{align}
  \pd{t} \bfV +  \gamma \bfV + \bfV_\alpha \cdot \nabla \bfU_\alpha +   \tilde{\bfU}_\alpha \cdot \nabla \bfV_\alpha +  \nabla q = 0, \quad \bfV(0) = \bfU(0) - \tbfU(0)
\end{align}
with $q$ the difference of the pressures.
We immediately infer that
\begin{align}
  \frac{1}{2} \frac{d}{dt} \|\Lambda^{-\alpha/2} \bfV\|^2 +  \gamma \|\Lambda^{-\alpha/2}  \bfV\|^2 = - \int  \bfV_\alpha \cdot \nabla \bfU_\alpha \cdot   \bfV_\alpha dx.
  \label{eq:ap:3}
\end{align}
When $\alpha \geq 2/3$ we have
$\frac{1}{3} \geq \frac{1}{2} - \frac{\alpha}{4}$ and hence (in $n = 2$)
with the Sobolev imbedding of $H^{\alpha/2} \subset L^3$ and basic properties of the Biot-Savart kernel we infer
\begin{align}
 \left| \int  \bfV_\alpha \cdot \nabla \bfU_\alpha \cdot   \bfV_\alpha dx  \right|
 &\leq \|\bfV_\alpha\|_{L^3}^2 \|\nabla \bfU_\alpha\|_{L^3}
 \leq C\| \Lambda^{-\alpha} \bfV \|_{H^{\alpha/2}}^2  \|\Lambda^{-\alpha} \nabla \bfU\|_{H^{\alpha/2}}
 \leq C\| \Lambda^{-\alpha} \bfV \|_{H^{\alpha/2}}^2  \|\Lambda^{-\alpha} \xi \|_{H^{\alpha/2}} \notag\\
 &\leq C \| \Lambda^{-\alpha/2} \bfV \|^2  \|\Lambda^{-\alpha/2} \xi \|.
 \label{eq:2d:bnd:1}
\end{align}

\subsubsection{Existence and Uniqueness of Solutions and  Markov Semigroup}
With these observation in hand we turn now to address the well-possedness for \eqref{eq:e:v:1}--\eqref{eq:e:v:2}.
The a priori bounds \eqref{eq:ap:1}, \eqref{eq:ap:2} in combination with   \eqref{eq:ap:3}--\eqref{eq:2d:bnd:1} are the basis of:
\begin{proposition}\label{prop:e:v:1}
  Assume that $\alpha \geq 2/3$ and consider \eqref{eq:e:v:1} in the case $n = 2$.  Then, for all $\bfU_0 \in H^{1- \alpha/2}$, there exists a unique
  \begin{align*}
   	\bfU \in L^2(\Omega; L^\infty_{loc}([0, \infty); H^{1- \alpha/2}))
  \end{align*}
  evolving continuously in $L^2$ which is an adapted, pathwise solution of \eqref{eq:e:v:1}.  Taking $\bfU(t,\bfU_0)$ as the unique solution associated
  to a given $\bfU_0 \in H^{1- \alpha/2}$ we have that
  \begin{align}
     \bfU(t, \bfU_0^n) \to      \bfU(t, \bfU_0)
    \textrm{ almost surely in the $H^{-\alpha/2}$ topology}
    \label{eq:w:top:conv}
  \end{align}
  for any sequence $\{\bfU_0^n\}_{n \geq 1} \subset H^{1- \alpha/2}$
  converging in $H^{-\alpha/2}$.
  \end{proposition}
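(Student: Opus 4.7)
The plan is to prove existence by a Galerkin truncation argument, to derive uniqueness from an energy estimate on the difference of two solutions, and to obtain the continuity in initial data \eqref{eq:w:top:conv} as an immediate corollary of the same estimate. The key observation driving everything is that in two dimensions the Biot-Savart correspondence yields $\|\bfU\|_{H^{1-\alpha/2}}^2 \sim \|\Lambda^{-\alpha/2}\Vort\|^2$, so the almost sure vorticity control already packaged in \eqref{eq:exp:mar:2} is exactly almost sure control in the solution space $H^{1-\alpha/2}$.

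For existence, I would introduce the Galerkin projection $P_m$ onto the first $m$ Fourier modes and consider the finite-dimensional SDE
$$d\bfU^{(m)} + \bigl(\gamma\bfU^{(m)} + P_m(\bfU^{(m)}_\alpha\cdot\nabla\bfU^{(m)}_\alpha) + \nabla p^{(m)}\bigr)dt = \sum_{k=1}^d P_m\sigma_k dW^k, \qquad \bfU^{(m)}(0) = P_m\bfU_0.$$
The coefficients are smooth, so local strong solutions exist, and the Galerkin analogues of \eqref{eq:exp:mar:1} and \eqref{eq:exp:mar:2} (with constants independent of $m$) promote them to global solutions uniformly bounded, almost surely, in $L^\infty_{loc}([0,\infty); H^{1-\alpha/2})$. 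These bounds plus the equation give uniform control of a fractional time derivative in a negative Sobolev norm, so Aubin-Lions compactness together with Prokhorov--Skorokhod produces a subsequence converging almost surely, on a new probability space, to a limit $\bfU$; the $\Lambda^{-\alpha}$ smoothing in the nonlinearity then suffices to pass to the limit in the weak form and identify $\bfU$ as an adapted pathwise solution. Alternatively, since the noise is additive, one may subtract the stochastic convolution and analyze the resulting random PDE pathwise.

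For uniqueness and continuity in initial data, let $\bfU$ and $\tilde{\bfU}$ be two solutions starting from $\bfU_0$ and $\tilde{\bfU}_0$, and set $\bfV = \bfU - \tilde{\bfU}$. Combining the identity \eqref{eq:ap:3} with the bilinear bound \eqref{eq:2d:bnd:1}---this is precisely where the hypotheses $n=2$ and $\alpha\geq 2/3$ are used---gives
$$\frac{d}{dt}\|\Lambda^{-\alpha/2}\bfV\|^2 + 2\gamma\|\Lambda^{-\alpha/2}\bfV\|^2 \leq C\|\Lambda^{-\alpha/2}\bfV\|^2\|\Lambda^{-\alpha/2}\Vort\|,$$
and Grönwall's inequality yields
$$\|\Lambda^{-\alpha/2}\bfV(t)\|^2 \leq \|\Lambda^{-\alpha/2}\bfV(0)\|^2\exp\Bigl(-2\gamma t + C\int_0^t\|\Lambda^{-\alpha/2}\Vort\|\,ds\Bigr).$$
By \eqref{eq:exp:mar:2} the integral $\int_0^t\|\Lambda^{-\alpha/2}\Vort\|\,ds$ is almost surely finite for each $t>0$. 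Setting $\bfV(0) = 0$ delivers uniqueness (and in particular the Markovian dependence $\bfU(t,\bfU_0)$), while applying the same estimate with $\bfU_0^n$ and $\bfU_0$ yields \eqref{eq:w:top:conv}: the prefactor $\|\Lambda^{-\alpha/2}(\bfU_0^n - \bfU_0)\|^2$ tends to zero by hypothesis while the exponential factor is pathwise finite.

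The principal obstacle lies in the existence step, specifically in identifying the quadratic nonlinearity $\bfU^{(m)}_\alpha\cdot\nabla\bfU^{(m)}_\alpha$ in the limit: with $\alpha\geq 2/3$ the regularization is relatively mild, so one needs the extra compactness afforded by Aubin-Lions (or the pathwise reduction available thanks to additive noise) to justify passing to the limit. By contrast, once a solution exists in the stated class, both the uniqueness claim and the continuous-dependence claim \eqref{eq:w:top:conv} fall out of the single Grönwall estimate above, with the a.s.\ finiteness of $\int_0^t\|\Lambda^{-\alpha/2}\Vort\|\,ds$ furnished by \eqref{eq:exp:mar:2}.
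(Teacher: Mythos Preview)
Your proposal is correct and matches the paper's own approach exactly: the paper states that the a priori bounds \eqref{eq:ap:1}, \eqref{eq:ap:2} together with the difference estimate \eqref{eq:ap:3}--\eqref{eq:2d:bnd:1} are the basis of the proposition, that existence follows from a standard Faedo--Galerkin procedure, and then omits further details. Your write-up supplies precisely those details---Galerkin truncation with the uniform $H^{1-\alpha/2}$ bounds for existence, and the Gr\"onwall inequality derived from \eqref{eq:ap:3}--\eqref{eq:2d:bnd:1} for both uniqueness and the continuous dependence \eqref{eq:w:top:conv}---so there is nothing to add.
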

  \noindent The existence of solutions in this class may be established with a standard Fado-Galerkin procedure.   We omit further details.

Given Proposition~\ref{prop:e:v:1} we may thus define the Markov transition kernels $\{P_t\}_{t \geq 0}$ associated to \eqref{eq:e:v:1}--\eqref{eq:e:v:2}
as
  \begin{align*}
     P_t(\bfU_0, A) = \Prb(\bfU(t, \bfU_0) \in  A)
  \end{align*}
  These kernels are Feller in  $H^{-\alpha/2}$ namely, given any $\phi \in C_b(H^{-\alpha/2})$, $t \geq 0$,
$P_t \phi \in C_b(H^{-\alpha/2})$.

\subsubsection{The Existence of an Invariant Measure (n =2)}

To prove the existence of an invariant measure we make use of the abstract results in Appendix~\ref{sec:ex:IM}.
In the present concrete setting we take $\spV = H^{1 - \alpha/2}$ and $\spH = H^{-\alpha/2}$.  It is easy to see that (by for example taking $\rho_n$
to be the projection onto $H_n$, the span of the first $n$ elements of a sinusoidal basis) these spaces satisfy the conditions
imposed on $\spV, \spH$ in the Appendix.      Notice moreover that, as we identified in Proposition~\ref{prop:e:v:1} and the surrounding commentary,
the Markov transition kernel associated to  \eqref{eq:e:v:1}--\eqref{eq:e:v:2} is defined on $V$ and is readily seen to be $H$-Feller.

In order to apply Corollary~\ref{cor:ex:c} and hence infer the existence of invariant states we
we now consider,  for each $\epsilon > 0$, the viscous regularizations of \eqref{eq:e:v:1} given as
\begin{align}
  d \bfUe + (\gamma \bfUe - \epsilon \Delta \bfUe + \bfUe_\alpha \cdot \nabla \bfUe_\alpha + \nabla p) dt = \sum_{k =1}^N \sigma_k dW^k, \quad \nabla \cdot \bfUe = 0,
  \quad \bfUe(0) = \bfU_0.
  \label{eq:e:v:vis:1}
\end{align}
As above \eqref{eq:e:v:vis:1} has an associated vorticity form
\begin{align}
  d \qe + (\gamma \qe - \epsilon \Delta \qe + \bfUe_\alpha \cdot \nabla \qe_\alpha) dt = \sum_{k =1}^N \rho_k dW^k.
  \label{eq:e:v:vis:2}
\end{align}
For the same reasons as  \eqref{eq:e:v:1}--\eqref{eq:e:v:2} these equations define
a collections of Markov kernels $\{P^\epsilon_t\}_{t \geq 0}$ for each $\epsilon > 0$ on $\spV = H^{1 - \alpha/2}$.

From the It\={o} lemma we obtain an evolution like  \eqref{eq:ap:2} for $\|\Lambda^{-\alpha/2} \qe \|^2$ but which
has the additional viscous term $2\epsilon \|\nabla  \Lambda^{-\alpha/2} \qe \|^2dt$.  We thus
obtain, for any $t > 0$
\begin{align}
   \epsilon \E \int_0^t  \|\nabla \Lambda^{-\alpha/2} \qe\|^2 ds
   = \epsilon \E \int_0^t  \|  \Lambda^{2-\alpha/2}  \bfUe \|^2 ds
   \leq \| \Lambda^{1-\alpha/2}  \bfUe_0  \|^2 + \| \Lambda^{1-\alpha/2}   \sigma \|^2 t.
   \label{eq:k:b:i:l:bnd}
\end{align}
Hence, by applying the Krylov-Bogoliubov averaging procedure we immediately infer, for all $\epsilon$ strictly positive, that there existence of an invariant
$\mu^\epsilon$ for the Markov semigroup $P^\epsilon$ associated with \eqref{eq:e:v:vis:1}.
Noting that the bound \eqref{eq:exp:mar:3} also holds
for $\qe$ with all of the constants independent of $\epsilon > 0$ and we infer
\begin{align}
  \sup_{\epsilon > 0} \int \exp(\eta \| \Lambda^{1 - \alpha/2} \bfU \|^2) d \mu^\epsilon(\bfU)  \leq C < \infty.
  \label{eq:uni:2d:ev}
\end{align}
We have thus established the condition \eqref{eq:uni:mom:bnd:cond} for the collection of invariant measure for $P^\epsilon$.   The existence now
follows once we establish \eqref{eq:weak:u:conv} in our setting.

For this purpose fix any initial condition $\bfU_0 \in H^{1 - \alpha/2}$.
Observe that $\bfV^\epsilon = \bfU(t, \bfU_0) - \bfUe(t,\bfU_0)$ satisfies
\begin{align}
  \pd{t} \bfV^\epsilon +  \gamma \bfV^\epsilon
  +  \bfV_\alpha^\epsilon \cdot \nabla \bfU_\alpha + \bfU_\alpha^\epsilon \cdot \nabla \bfV_\alpha^\epsilon + \nabla p + \epsilon \Delta \bfUe = 0, \quad \bfV^\epsilon(0) =0.
  \label{eq:d:diff:apx:ev}
\end{align}
Similarly to above in \eqref{eq:2d:bnd:1},
\begin{align*}
  \frac{1}{2} \frac{d}{dt} \|\Lambda^{-\alpha/2} \bfV^\epsilon\|^2 +  \gamma \|\Lambda^{-\alpha/2}  \bfV^\epsilon\|^2 \leq C \| \Lambda^{-\alpha/2} \bfV^\epsilon \|^2  \|\Lambda^{-\alpha/2} \xi \|
  + \epsilon  \|\Delta \Lambda^{-\alpha/2} \bfUe\| \|\Lambda^{-\alpha/2} \bfV^\epsilon\|
\end{align*}
and hence
\begin{align*}
  \frac{1}{2} \frac{d}{dt} \|\Lambda^{-\alpha/2} \bfV^\epsilon\| \leq C \| \Lambda^{-\alpha/2} \bfV^\epsilon \|  \|\Lambda^{-\alpha/2} \xi \|
  + \epsilon  \|\Delta \Lambda^{-\alpha/2} \bfUe\|
\end{align*}
which implies
\begin{align}
   \|\Lambda^{-\alpha/2} \bfV^\epsilon\| \leq  \sqrt{\epsilon} \exp\left( Ct + \frac{\eta \gamma}{2} \int_0^t  \|\Lambda^{-\alpha/2} \xi \|^2 ds \right)
    \int_0^t  \sqrt{\epsilon} \|\nabla \Lambda^{-\alpha/2} \qe\| ds.
\end{align}
Taking expected values we find
\begin{align}
  \E \|\Lambda^{-\alpha/2} \bfV^\epsilon\| \leq  \sqrt{\epsilon} \sqrt{t} \left( \E \exp\left( Ct + \eta \gamma \int_0^t  \|\Lambda^{-\alpha/2} \xi \|^2 ds \right) \right)^{1/2}
   \left( \epsilon \E \int_0^t  \|\nabla \Lambda^{-\alpha/2} \qe\|^2 ds \right)^{1/2}.
   \label{eq:eps:ev:conv:f}
\end{align}
Combining this bound with \eqref{eq:exp:mar:4} (which holds for solution of \eqref{eq:e:v:vis:1} with constant independent of $\epsilon > 0$)
and \eqref{eq:k:b:i:l:bnd} we conclude that
\begin{align}
  \E \|\Lambda^{-\alpha/2} (\bfU(t, \bfU_0) - \bfUe(t,\bfU_0)) \| \leq  \sqrt{\epsilon} \exp(C(t +  \| \Lambda^{1 - \alpha/2} \bfU_0\|^2))
\end{align}
for a constant $C$ independent of $t, \epsilon$ and $\bfU_0$.  The condition \ref{eq:weak:u:conv} now follows and in conclusion we have that
\begin{proposition}\label{prop:ex:IM}
  Assume that $\alpha \geq 2/3$ and consider \eqref{eq:e:v:1} in the case $n = 2$.   Then for any $\gamma > 0$ there exists at least one invariant measure
  $\mu$ of \eqref{eq:e:v:1}  such that
  \begin{align}
    \int \exp(\eta \| \Lambda^{1 - \alpha/2} \bfU \|^2) d \mu(\bfU)  < \infty.
  \end{align}
\end{proposition}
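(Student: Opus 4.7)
My plan is to obtain the invariant measure for the Euler--Voigt system as a weak limit of invariant measures $\mu^\epsilon$ of its viscous regularizations \eqref{eq:e:v:vis:1}, using the abstract result stated as Corollary~\ref{cor:ex:c} in the appendix. The reason we cannot simply apply Krylov--Bogoliubov directly to $P_t$ is the absence of any obvious smoothing mechanism beyond $H^{1-\alpha/2}$, which is the space in which solutions are constructed; the regularization restores enough compactness to carry out the averaging procedure while preserving the bounds needed to identify the limit as invariant for the original system.

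First I would fix $\epsilon>0$, use the extra $-\epsilon\Delta$ term together with the dissipation estimate \eqref{eq:k:b:i:l:bnd} on the viscous vorticity $\qe$, and apply Krylov--Bogoliubov on $\spV = H^{1-\alpha/2}$ to produce an invariant measure $\mu^\epsilon$ for $P^\epsilon_t$. The crucial observation is that the derivation of \eqref{eq:exp:mar:3} for the inviscid vorticity only used the damping term $\gamma$ and the structure of the noise in $\Lambda^{-\alpha/2}\rho$; the viscous term $\epsilon\|\nabla\Lambda^{-\alpha/2}\qe\|^2$ only \emph{helps} in that energy balance. Therefore the same exponential moment bound holds for $\qe$ with $\epsilon$-independent constants. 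Integrating against the stationary measure and sending $t\to\infty$ yields the uniform bound
\begin{align*}
\sup_{\epsilon>0}\int \exp\bigl(\eta\|\Lambda^{1-\alpha/2}\bfU\|^2\bigr)\,d\mu^\epsilon(\bfU) \leq C<\infty,
\end{align*}
which is exactly the hypothesis \eqref{eq:uni:mom:bnd:cond} of Corollary~\ref{cor:ex:c}, with $\spV=H^{1-\alpha/2}$ compactly embedded in $\spH=H^{-\alpha/2}$.

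Next I would verify the second hypothesis of Corollary~\ref{cor:ex:c}, namely the convergence $P^\epsilon_t\phi\to P_t\phi$ for $\phi \in C_b(\spH)$. For this I would use the difference equation \eqref{eq:d:diff:apx:ev} satisfied by $\bfV^\epsilon=\bfU(\cdot,\bfU_0)-\bfUe(\cdot,\bfU_0)$ for a common initial condition $\bfU_0\in\spV$. Testing with $\Lambda^{-\alpha}\bfV^\epsilon$, invoking the same nonlinear estimate \eqref{eq:2d:bnd:1} that underlies uniqueness (here $\alpha\geq 2/3$ is used), and absorbing the viscous remainder as a forcing of order $\sqrt{\epsilon}$ gives a Gr\"onwall-type inequality whose exponential factor involves $\int_0^t \|\Lambda^{-\alpha/2}\xi\|^2 ds$. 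Taking expectations with Cauchy--Schwarz and combining with the uniform exponential integrability \eqref{eq:exp:mar:4} and the viscous dissipation bound \eqref{eq:k:b:i:l:bnd} produces the quantitative estimate
\begin{align*}
\E\|\Lambda^{-\alpha/2}(\bfU(t,\bfU_0)-\bfUe(t,\bfU_0))\| \leq \sqrt{\epsilon}\,\exp\bigl(C(t+\|\Lambda^{1-\alpha/2}\bfU_0\|^2)\bigr),
\end{align*}
which for fixed $\bfU_0$ and $t$ yields convergence in the $\spH$ topology, hence pointwise convergence of $P^\epsilon_t\phi$ to $P_t\phi$ for $\phi \in C_b(\spH)$.

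Finally I would invoke Corollary~\ref{cor:ex:c} directly: the uniform moment bound makes $\{\mu^\epsilon\}$ tight on $\spH$ (thanks to the compact embedding $\spV\hookrightarrow\spH$) so that a subsequence converges weakly to some $\mu$, and the convergence of the semigroups allows us to pass to the limit in the invariance identity $\int P^\epsilon_t\phi\,d\mu^\epsilon=\int \phi\,d\mu^\epsilon$. Fatou's inequality applied to the exponential moment then transfers the bound to $\mu$, yielding the claim. The principal technical obstacle is to ensure that the exponential moment estimate \eqref{eq:exp:mar:3} really is uniform in $\epsilon$ and that all constants appearing in the convergence estimate \eqref{eq:eps:ev:conv:f} depend only on $\bfU_0$, $t$, and the noise, but not on $\epsilon$; the remaining steps are then routine applications of the abstract machinery.
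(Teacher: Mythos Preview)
Your proposal is correct and follows essentially the same approach as the paper: you introduce the viscous regularizations \eqref{eq:e:v:vis:1}, obtain $\mu^\epsilon$ by Krylov--Bogoliubov via \eqref{eq:k:b:i:l:bnd}, derive the $\epsilon$-uniform exponential moment \eqref{eq:uni:2d:ev} from \eqref{eq:exp:mar:3}, establish the convergence estimate $\E\|\Lambda^{-\alpha/2}(\bfU-\bfUe)\|\leq\sqrt{\epsilon}\exp(C(t+\|\Lambda^{1-\alpha/2}\bfU_0\|^2))$ by combining \eqref{eq:d:diff:apx:ev}, \eqref{eq:2d:bnd:1}, \eqref{eq:exp:mar:4} and \eqref{eq:k:b:i:l:bnd}, and then invoke Corollary~\ref{cor:ex:c}. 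The only minor point is that your convergence bound is actually uniform over $V$-balls (not merely pointwise), which is precisely what the hypothesis \eqref{eq:weak:u:conv} of the abstract lemma demands for $\phi\in C_b(H)\cap\mathrm{Lip}(H)$.
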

\subsubsection{Uniqueness of the Invariant Measure (n =2)}
In order to establish the uniqueness of the invariant measure identified in \eqref{prop:ex:IM} fix any $\bfU_0, \tbfU_0$.  We take $\bfU = \bfU(t, \bfU_0)$ as the associated
solution of \eqref{eq:e:v:1} and consider $\tbfU$ solving
\begin{align}
  d \tbfU + (\gamma \tbfU + \tbfU_\alpha \cdot \nabla \tbfU_\alpha + \nabla p) dt = \lambda P_N (\bfU - \tbfU) dt \indFn{t \leq \tau_R} dt +  \sum_{k =1}^N \sigma_k dW^k,
  \quad \tbfU(0) = \tbfU_0,
  \label{eq:e:v:t:1}
\end{align}
where $\tau_R$ is the stopping time
\begin{align}
   \tau_R  := \inf_{t \geq 0} \left\{  \int_0^t \lambda^2 \|P_N (\bfU - \tbfU)\|^2 dt  > R  \right\}.
\end{align}
Here $\lambda, R$ are parameters to be determined presently.  Let $\bfV = \bfU - \tbfU$ and observe that
\begin{align}
  \pd{t} \bfV +  \gamma \bfV + \bfU_\alpha \cdot \nabla \bfV_\alpha + \bfV_\alpha \cdot \nabla \bfU_\alpha +  \nabla p =  - \lambda P_N \bfV \indFn{t \leq \tau_R},
\end{align}
so that on the interval $[0, \tau_R]$
\begin{align}
  \frac{1}{2} \frac{d}{dt} \|\Lambda^{-\alpha/2} \bfV\|^2 +  \gamma \|\Lambda^{-\alpha/2}  \bfV\|^2 + \lambda \|P_N \Lambda^{-\alpha/2}  \bfV\|^2
   = - \int  \bfV_\alpha \cdot \nabla \bfU_\alpha \cdot   \bfV_\alpha dx.
  \label{eq:ap:4}
\end{align}
We suppose that $\alpha > 2/3$ so that for some $\delta = \delta(\alpha) > 0$ we have that $H^{\alpha/2 - \delta} \subset L^3$.  As such, cf. \eqref{eq:2d:bnd:1}, we have
from the inverse poincare inequality that
\begin{align}
 &\left| \int  \bfV_\alpha \cdot \nabla \bfU_\alpha \cdot   \bfV_\alpha dx  \right| \notag\\
 &\qquad \qquad
 	\leq C (\|  P_N \Lambda^{-\alpha} \bfV \|_{H^{\alpha/2 }}^2 + \|  Q_N \Lambda^{-\alpha} \bfV \|_{H^{\alpha/2 - \delta}}^2  ) \|\Lambda^{-\alpha} \xi \|_{H^{\alpha/2}} \notag\\
 &\qquad \qquad \leq \lambda\|  P_N \Lambda^{-\alpha} \bfV \|_{H^{\alpha/2 }}^2
 + \frac{C}{\lambda}\| \Lambda^{-\alpha} \bfV \|_{H^{\alpha/2 }}^2 \|\Lambda^{-\alpha} \xi \|_{H^{\alpha/2}}^2
 + \frac{C}{N^\delta}  \| \Lambda^{-\alpha} \bfV \|_{H^{\alpha/2 }}^2   \|\Lambda^{-\alpha} \xi \|_{H^{\alpha/2}}.
 \label{eq:2d:bnd:2}
 \end{align}
Combining this bound with \eqref{eq:ap:4} and rearranging we find that $[0, \tau_R]$
\begin{align}
  \frac{1}{2} \frac{d}{dt} \|\Lambda^{-\alpha/2} \bfV\|^2 +  \left( \gamma - C(\lambda^{-1} + N^{-\delta}) (1 + \|\Lambda^{-\alpha} q \|_{H^{\alpha/2}}^2)\right)
  \|\Lambda^{-\alpha/2}  \bfV\|^2 \leq 0
\end{align}
We emphasize that $C$ depends only on quantities coming from Sobolev embedding and that $\delta$ only depends on $\alpha$.
Both quantities are independent of our choice of $R >0$.
Thus, by choosing $\lambda$ and $N$ sufficiently large (depending again only on $\alpha$, $\gamma$, $\|\Lambda^{-\alpha/2} \rho\|^2$ and universal quantities), we obtain
the bound
\begin{align}
	\|\Lambda^{-\alpha/2} \bfV(t \wedge \tau_R)\|^2 \leq
	\exp \left( - \frac{\gamma}{2} t\wedge \tau_R
	+ \frac{\gamma \min \{\gamma, 1\} }{4 \max\{\|\Lambda^{-\alpha/2} \rho\|^2, 1 \} }  \int_0^{t\wedge \tau_R}\! \! \! \! \! \!\|\Lambda^{-\alpha} \xi \|_{H^{\alpha/2}}^2 ds \right)
	\| \Lambda^{-\alpha/2}( \bfU_0 -\tbfU_0)\|^2
\end{align}
This implies that on the set
\begin{align}
 E_K := \left\{ \sup_{t \geq 0} \left(\| \Lambda^{-\alpha/2} \xi(t)\|^2+  \gamma \int_0^t \| \Lambda^{-\alpha/2} \xi \|^2 ds -
   (\|\Lambda^{-\alpha/2} \rho\|^2 t  + \|\Lambda^{-\alpha/2} \xi_0\|^2) \right) \leq K  \right\}
\end{align}
we have
\begin{align}
	\|\Lambda^{-\alpha/2} \bfV(t \wedge \tau_R)\|^2 \leq
	\exp \left( - \frac{\gamma}{4} t\wedge \tau_R
	+ K + \|\Lambda^{-\alpha/2} \xi_0\|^2 \right)
	\| \Lambda^{-\alpha/2}( \bfU_0 -\tbfU_0)\|^2.
\end{align}
By now choosing $K$ large enough that $\Prb(E_K) > 1/2$ and then taking $R$ sufficiently large we now obtain
\begin{proposition}
 Consider \eqref{eq:e:v:1} in the case $n = 2$.   Then for any $\gamma > 0$ and any $\alpha > 2/3$ there exists
 an $N = N(\alpha, \gamma, \|\Lambda^{-\alpha/2} \rho\|^2)$ such that if $H_N \subset \mbox{Range}(\sigma)$
 then \eqref{eq:e:v:1} has at most one invariant measure.
\end{proposition}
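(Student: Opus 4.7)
The plan is to invoke Corollary~\ref{cor:precise:app} directly, using the shifted equation \eqref{eq:e:v:t:1} as the asymptotic coupling candidate in the spirit of the recipe in Section~\ref{sec:recipeC}. Concretely, I would take the Polish space underlying the Markov dynamics to be $H^{-\alpha/2}$, the weaker distance to be $\tilde{\rho}(u,v) = \|\Lambda^{-\alpha/2}(u-v)\|$, and $H_0 = H^{1-\alpha/2}$, which is where our solutions live by Proposition~\ref{prop:e:v:1}. The first task is to verify the Girsanov condition: since $H_N \subset \mathrm{Range}(\sigma)$ and the shift $h(t) = \lambda \sigma^{-1} P_N(\bfU - \tbfU) \indFn{t \leq \tau_R}$ satisfies $\int_0^\infty |h(s)|^2 ds \leq R \cdot \|\sigma^{-1} P_N\|^2_{\mathrm{op}}$ almost surely by definition of $\tau_R$, Theorem~\ref{thm:GirsonovRedux} gives equivalence of the pathspace laws of $\tbfU$ and of $\bfU(\cdot, \tbfU_0)$ on $(H^{-\alpha/2})^\N$ for \emph{any} choice of $\lambda, R > 0$.

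The analytic heart of the matter is the energy estimate for $\bfV = \bfU - \tbfU$ in the $H^{-\alpha/2}$ norm on the stochastic interval $[0, \tau_R]$. The key point, and the main technical step, is to exploit the strict inequality $\alpha > 2/3$ by choosing $\delta = \delta(\alpha) > 0$ small enough that $H^{\alpha/2 - \delta} \hookrightarrow L^3(\TT^2)$. This allows one to split the nonlinear self-transport $\int \bfV_\alpha \cdot \nabla \bfU_\alpha \cdot \bfV_\alpha$ into a low-mode piece involving $\|P_N \Lambda^{-\alpha}\bfV\|_{H^{\alpha/2}}^2$, which is absorbed by the feedback term $\lambda \|P_N \Lambda^{-\alpha/2} \bfV\|^2$ once $\lambda$ is chosen large, and a high-mode piece, which acquires a small factor $N^{-\delta}$ via the inverse Poincaré inequality on $Q_N$. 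This splitting — which has no analogue when $\alpha = 2/3$ and is the reason for the strict inequality in the hypothesis — yields a differential inequality of the form
\begin{align*}
  \tfrac{d}{dt} \|\Lambda^{-\alpha/2} \bfV\|^2 + 2\bigl(\gamma - C(\lambda^{-1} + N^{-\delta})(1 + \|\Lambda^{-\alpha/2}\Vort\|^2)\bigr)\|\Lambda^{-\alpha/2} \bfV\|^2 \leq 0
\end{align*}
on $[0, \tau_R]$, with $C$ depending only on universal Sobolev constants.

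Next, I would Gr\"onwall this and control the noise-dependent coefficient using the exponential martingale bound \eqref{eq:exp:mar:2} applied to the enstrophy. Defining a set $E_K$ on which $\int_0^t \|\Lambda^{-\alpha/2}\Vort\|^2 ds$ is controlled linearly in $t$, with $\Prb(E_K) > 0$ for every sufficiently large $K$, I would choose $\lambda$ and $N$ large (depending on $\gamma$, $\alpha$, $\|\Lambda^{-\alpha/2}\rho\|$ and $K$) so that on $E_K$ one obtains exponential decay of $\|\Lambda^{-\alpha/2}\bfV(t \wedge \tau_R)\|$ with a deterministic rate $\gamma/4$. Since the total control cost $\int_0^{\tau_R} \lambda^2 \|P_N \bfV\|^2 ds$ is then a priori bounded on $E_K$ in terms of $\|\bfU_0 - \tbfU_0\|_{H^{-\alpha/2}}$, I can pick $R$ large enough (depending on $\bfU_0, \tbfU_0$) that $E_K \subset \{\tau_R = \infty\}$, so $\|\Lambda^{-\alpha/2}\bfV(t)\| \to 0$ on $E_K$.

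Finally, the law of $\{(\bfU(nT, \bfU_0), \tbfU(nT, \tbfU_0))\}_{n \in \N}$ is then an element of $\tilde{\mathcal{C}}(\delta_{\bfU_0} P^\N, \delta_{\tbfU_0} P^\N)$ that charges the set $D_{\tilde{\rho}}$ with probability at least $\Prb(E_K) > 0$. A standard mollification argument (cf.\ Remark~\ref{rm:howWeUseIt:alt}) verifies that $\mathcal{G}_{\tilde{\rho}}$ determines measures on $H^{-\alpha/2}$, so Corollary~\ref{cor:precise:app} yields at most one ergodic invariant measure with $\mu(H^{1-\alpha/2}) > 0$, and the moment bound \eqref{eq:uni:2d:ev} from Proposition~\ref{prop:ex:IM} ensures any invariant measure constructed there satisfies this. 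The main obstacle throughout is the low/high mode splitting of the cubic nonlinearity: the gap $\alpha > 2/3$ is exactly what is needed to gain the $N^{-\delta}$ smallness in the high modes, and this is the one place where the argument genuinely departs from the existence proof in Proposition~\ref{prop:ex:IM}.
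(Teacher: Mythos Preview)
Your proposal is correct and follows essentially the same route as the paper: the same shifted system \eqref{eq:e:v:t:1}, the same low/high-mode splitting \eqref{eq:2d:bnd:2} exploiting $H^{\alpha/2-\delta}\hookrightarrow L^3$ for $\alpha>2/3$, the same differential inequality, and the same closure via the exponential martingale bound \eqref{eq:exp:mar:2} on the set $E_K$. One small sharpening: in the paper the choice of $N$ and $\lambda$ is made \emph{before} fixing $K$ (they need only beat the linear-in-$t$ growth rate $\|\Lambda^{-\alpha/2}\rho\|^2/\gamma$ of the enstrophy integral, not the constant $K$), which is why $N=N(\alpha,\gamma,\|\Lambda^{-\alpha/2}\rho\|^2)$ is genuinely independent of $K$ and of the initial data.
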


\begin{remark}[The Three Dimensional Case]\label{rmk:3d:case}
As already mentioned the approach taken here also yields the existence and uniqueness
of invariant measures for \eqref{eq:e:v:1}-\eqref{eq:e:v:2} in dimensional three whenever $\alpha \geq 2$.
The following modifications of the proof are required primarily as a consequence of the fact that we are not able to
make use of the vorticity formulation in 3D as above in \eqref{eq:ap:2}.    Firstly we
note that we consider solutions $u \in L^2(\Omega; L^\infty([0,\infty); H^{-\alpha/2})$.  Taking $\bfV$ to be the difference of
two solutions, uniqueness
and continuous dependence on data in $H^{-\alpha/2}$ follows from the estimate
\begin{align}
\left| \int  \bfV_\alpha \cdot \nabla \bfU_\alpha \cdot   \bfV_\alpha dx  \right| \leq \| \nabla \bfU_\alpha \| \|  \bfV_\alpha \|_{L^4}^2
\leq \| \Lambda^{-\alpha/2} \bfU \| \|\Lambda^{3/4 -\alpha} \bfV \|^2 \leq \| \Lambda^{-\alpha/2} \bfU \| \|\Lambda^{ -\alpha/2} \bfV \|^2
\label{eq:ev:un:3d}
\end{align}
which we may combine with \eqref{eq:ap:1} to close \eqref{eq:ap:3}.  The estimates leading to the existence
of an invariant measure are also a little different.  Here we take $V = H^{-\alpha/2}$ and $H = H^{-\alpha}$.
The bounds \eqref{eq:k:b:i:l:bnd} and \eqref{eq:uni:2d:ev} are replaced with
\begin{align*}
   \sup_{\epsilon > 0, t\geq 1}  \epsilon \E \frac{1}{t}\int_0^t  \|  \Lambda^{1-\alpha/2}  \bfUe \|^2 ds
   + \sup_{\epsilon > 0} \int \exp(\eta \| \Lambda^{- \alpha/2} \bfU \|^2) d \mu^\epsilon(\bfU)  \leq C < \infty.
\end{align*}
and the convergence $\bfU^\epsilon \to \bfU$ is now carried out in the $H^{-\alpha}$ topology.  For the convergence,
taking $\bfV^\epsilon = \bfU - \bfU^\epsilon$,
\eqref{eq:d:diff:apx:ev} leads to
\begin{align*}
  \frac{1}{2} \frac{d}{dt} \|\Lambda^{-\alpha} \bfV^\epsilon\|^2 +  \gamma \|\Lambda^{-\alpha}  \bfV^\epsilon\|^2 &\leq
  \left|\int (\bfV^\epsilon_\alpha \cdot \nabla \bfU_\alpha + \bfU_\alpha^\epsilon \cdot \nabla \bfV^\epsilon_\alpha) \Lambda^{-2\alpha} \bfV^\epsilon  dx\right |
  + \epsilon  \|\Delta \Lambda^{-\alpha} \bfUe\| \|\Lambda^{-\alpha} \bfV^\epsilon\| \\
  &\leq C \| \Lambda^{-\alpha} \bfV^\epsilon \|^2 (\| \Lambda^{-\alpha/2} \bfU \| + \| \Lambda^{-\alpha/2} \bfU^\epsilon \|)
  + \epsilon  \| \Lambda^{1-\alpha/2} \bfUe\| \|\Lambda^{-\alpha} \bfV^\epsilon\|.
\end{align*}
so that the convergence required by the abstract condition \eqref{eq:weak:u:conv} now follows in a similar fashion to
\eqref{eq:eps:ev:conv:f} above.
Finally regarding the uniqueness, the strategy is essentially the same once we notice that \eqref{eq:ev:un:3d} provides
the sub-criticality necessary to replace the estimate \eqref{eq:2d:bnd:2}.
\end{remark}
\subsection{A Damped Nonlinear Wave Equation}

Our final example is the damped Sine-Gordon equation which we write formally as
\begin{align}
  \partial_{tt} u + \alpha \partial_t u - \Delta u + \beta \sin(u) = \sum_{k =1}^d \sigma_k \dot{W}^k.
    \label{eq:sign:gordon:formal}
\end{align}
Here the unknown $u$ evolves on a bounded domain $\DD \subset \RR^n$ with smooth
boundary and satisfies the Dirichlet boundary condition $u_{\partial \DD} \equiv 0$.
The parameter $\alpha$ is strictly positive and $\beta$ is a given real number.
The functions $\sigma_k$ on $\DD$ will be specified below, and $\dot{W}^k$ represent a sequence of independent white noise processes.
This is written more rigorously as the system of stochastic partial differential equations
\begin{align}
  &d v + (\alpha v - \Delta u + \beta \sin(u))dt = \sum_{k =1}^d \sigma_k dW^k, \quad \frac{du}{dt}  = v,
  \label{eq:sign:gordon:rig}
\end{align}
which we supplement with the initial condition $u(0) = u_0, v(0) = v_0$.

The deterministic Sine-Gordon equation appears in the description of continuous Josephson junctions \cite{levi1978},
and has been studied extensively in a variety of contexts \cite{bishop1983,bishop1986,caraballo2004,dickey1976,fan2004,ghidaglia1987,kovavcivc1992,wang1997}.
For example, analysis of the existence and finite dimensionality of the attractor for the deterministic
counterpart of \eqref{eq:sign:gordon:formal} can be found in \cite{Temam1997}.

\subsubsection{Mathematical Preliminaries}

For any given $(u_0, v_0) \in X := H^1_0(\DD) \times L^2(\DD)$ there exists
a unique $U = (u,v) \in L^2(\Omega; C([0,\infty), X)$
which is a (weak) solution of \eqref{eq:sign:gordon:rig}.   These solutions $U(t) = U(t, U_0)$
depend continuously on $U_0 = (u_0, v_0) \in X$ and hence
$P_t \phi(U_0) := \E \phi(U(t,U_0))$ is a Feller Markov semigroup acting on $C_b(X)$.
Moreover when $(u_0, v_0) \in Y := (H^2(\DD) \cap H^1_0(\DD)) \times H^1_0(\DD)$
the corresponding solution satisfies $U  \in L^2(\Omega; C([0,\infty), Y)$.
In what follows we will
maintain the standing convention that $| \cdot | = \| \cdot \|_{L^2}$ and $\| \cdot \| = \| \cdot \|_{H^1}$ with
all other norms given explicitly.

The existence of solutions may be established via standard compactness
methods starting from a Galerkin truncation of
\eqref{eq:sign:gordon:rig} and making use of the following a priori estimates.  Take $\vshft = v +  \epsilon u$ with $\epsilon > 0$ to be
specified presently. Evidently
\begin{align}
  d \vshft + (\alpha - \epsilon) \vshft dt =  \left(\epsilon(\alpha - \epsilon) u + \Delta u - \beta \sin(u)\right)dt + \sum_{k=1}^d \sigma_k dW^k.
  \label{eq:vshft:eqn}
\end{align}
From the It\={o} lemma we infer
\begin{align*}
  d |\vshft|^2 + 2(\alpha - \epsilon) |\vshft|^2 dt =  \left( 2\epsilon(\alpha - \epsilon)  \langle u, \vshft \rangle + 2\langle \Delta u, \vshft \rangle - 2\beta \langle \sin(u), \vshft \rangle
  + |\sigma|^2 \right)dt + 2\langle \sigma, \vshft \rangle dW.
\end{align*}
Now since
\begin{align*}
  2\langle \Delta u, \vshft \rangle =  - \frac{d}{dt} \| u \|^2 - 2\epsilon \|u\|^2,
\end{align*}
we infer that when $\epsilon \leq \alpha/2$
\begin{align*}
  d (|\vshft|^2 + \| u \|^2) + (\alpha |\vshft|^2 + 2\epsilon \|u\|^2 ) dt  \leq  \left(\frac{\epsilon \alpha}{\sqrt{\lambda}}  \|u\| |r| + 2 |\beta| | \DD|^{1/2} |r| + |\sigma|^2\right)  dt
  	+ 2 \langle \sigma, \vshft \rangle dW,
\end{align*}
where $\lambda = \lambda(\DD)$ is the Poincar\'e constant.
By now choosing
\begin{align}
  \epsilon := \min \left\{  \frac{\lambda}{\alpha}, \frac{\alpha}{2}, \sqrt{\frac{\lambda}{2 }}\right\},
  \label{eq:eps:decision}
\end{align}
we have that
\begin{align}
  d (|\vshft|^2 + \| u \|^2) + \epsilon \left( |\vshft|^2 +  \|u\|^2 \right) dt  \leq   \Big( \frac{4 |\beta|^2 | \DD|}{\alpha}  + |\sigma|^2 \Big) dt + 2 \langle \sigma, \vshft \rangle dW.
  \label{eq:diff:eq:wave:low}
\end{align}
and that
\begin{align}
  \frac{1}{2}(|v|^2 + \|u\|^2) \leq |r|^2 + \|u\|^2\leq 2(|v|^2 + \|u\|^2).
  \label{eq:r:v:eqiv}
\end{align}
Combining the previous two inequalities and using the exponential Martingale bound, \eqref{eq:exp:mart}, we conclude
\begin{align}
  \Prb\biggl( \sup_{t \geq 0}
   \Big[ \frac{1}{2} |v(t)|^2 + \| u(t)   \|^2 + \frac{\epsilon}{4}  &\int_0^t (|v(s) |^2  +  \|u(s)\|^2) \, ds \notag\\
  &- \big(\frac{4 |\beta|^2 |\DD|}{\alpha}  + |\sigma|^2 \big)  t - 2(|v_0|^2 + \|u_0\|^2)   \Big]
  \geq  K  \biggr)  \leq e^{-\gamma K}, \label{eq:small:prob:wave:weak:norm}
\end{align}
for every $K > 0$ where $\gamma = \gamma(|\sigma|, \alpha) > 0$ is independent of $K$ and of the solution $U = (u,v)$.

In order to prove the existence of an invariant measure for $\{P_t\}_{t \geq 0}$ we next establish suitable bounds
for $U = (u,v)$ in $Y = (H^2(\DD) \cap H_0^1(\DD)) \times H_0^1(\DD)$.  Denote $- \Delta$ with Dirchlet boundary conditions as $A$.
Applying $A^{1/2}$ to \eqref{eq:vshft:eqn} and then invoking the It\=o lemma we obtain
\begin{align*}
  d \|\vshft\|^2 + 2(\alpha - \epsilon) \|\vshft\|^2 dt =&
  \left( 2\epsilon (\alpha - \epsilon)\langle A^{1/2} u, A^{1/2}\vshft \rangle - 2\langle \Delta u, \Delta \vshft \rangle - 2\beta \langle A^{1/2} \sin(u), A^{1/2}\vshft \rangle
  + \|\sigma\|^2 \right)dt \\
  &+ 2\langle A^{1/2} \sigma, A^{1/2} \vshft \rangle dW,
\end{align*}
and hence estimating as above and imposing the same condition on $\epsilon$ we find
\begin{align}
  d (\|\vshft \|^2 + | A u |^2) + \epsilon (\|\vshft\|^2 +  |Au|^2) dt  \leq  \Big(\|\sigma\|^2 +  \frac{4 |\beta|^2}{\alpha} \|u\|^2
  \Big) dt + 2  \langle A^{1/2} \sigma, A^{1/2} \vshft \rangle dW.  \label{eq:diff:eq:wave:high}
\end{align}
Combining \eqref{eq:diff:eq:wave:high} and  \eqref{eq:diff:eq:wave:low} and noting that, similarly to \eqref{eq:r:v:eqiv},
$\|r\|^2 + \|u\|^2_{H^2}\leq 2(\|v\|^2 + \|u\|^2_{H^2})$ we now infer
\begin{align*}
 \int_0^T \E (\|v(t)\|^2 +  \|u(t)\|^2_{H^2})dt
\leq& C \Big(\int_0^T(\E\|u(t)\|^2 + 1)dt \Big)  \leq CT,
\end{align*}
for any $T > 0$ when $u_0 = v_0 \equiv 0$.  Here the constant $C = C(\sigma, \beta, \alpha, \DD)$ but is independent of $T$.
The existence of an ergodic invariant measure $\mu \in Pr(X)$
for \eqref{eq:sign:gordon:rig} now follows from the
Krylov-Bogolyubov theorem.

\subsubsection{Asymptotic Coupling Arguments}

To establish the uniqueness of invariant measures for \eqref{eq:sign:gordon:rig}
we fix arbitrary $U_0, \tilde{U}_0 \in X = H^1_0(\DD) \times L^2(\DD)$.  Take
$U = (u, v)$ to be the solution of \eqref{eq:sign:gordon:rig} corresponding to $U_0$
and let $\tilde{U} = (\tilde{u}, \tilde{v})$ be the solution of
\begin{align}
  d \tilde{v} + (\alpha \tilde{v} - \Delta \tilde{u} + \beta \sin(\tilde{u}) -\beta\indFn{\tau_{K} > t} P_N (\sin(u)-\sin(\tilde{u})) )dt = \sum_k \sigma_k dW^k, \quad \frac{d}{dt} \tilde{u} = \tilde{v}   \label{eq:sign:gordon:rig:shft}
\end{align}
where $\tilde{u}(0) = \tilde{u}_0, \tilde{v}(0) = \tilde{v}_0$, and
\begin{align*}
  \tau_K := \inf_{t \geq 0}  \left\{   \int_0^t |u-\tilde{u}|^2 ds \geq K \right\}.
\end{align*}
In the framework of Section~\ref{sec:recipeC}, we have taken
$G(u,\tilde{u})=P_N (\sin(u)-\sin(\tilde{u})) )$ rather than $\lambda
P_N(u-\tilde{u})$ as in the preceding sections.
It follows that $h(t) = \indFn{\tau_{K} > t}\sigma^{-1}\beta P_N (\sin(u)-\sin(\tilde{u})) $ is a continuous adapted process in $\RR^N$ which
satisfies the Novikov condition \eqref{eq:st:nov:cond}.
Taking $w = u - \tilde{u}$ and subtracting \eqref{eq:sign:gordon:rig:shft} from \eqref{eq:sign:gordon:rig} we obtain
\begin{align}
  \partial_{tt} w + \alpha \partial_{t} w - \Delta w  = \beta (\sin(\tilde{u}) - \sin(u)) -  \indFn{\tau_{K} > t}\beta P_N (\sin(\tilde{u})-\sin(u)).
\end{align}
Modifying slightly the method of previous examples, uniqueness of the invariant measure will follow from showing that for $N,K>0$ sufficiently large,  $\tau_{K}=\infty$ almost surely, and moreover
\begin{align}
   |\partial_{t} w(t)|^2 + \|w(t)\|^2 \to 0 \textrm{ as } t \to \infty.
   \label{eq:good:con}
\end{align}

To this end, we again pursue the strategy leading to \eqref{eq:diff:eq:wave:low}, \eqref{eq:diff:eq:wave:high}
and introduce $\yshft =  \partial_t w + \epsilon w$ with $\epsilon$ as in \eqref{eq:eps:decision}.  Similarly
to \eqref{eq:vshft:eqn} above $y$ satisfies
\begin{align}
  \partial_t \yshft + (\alpha - \epsilon) \yshft - \Delta w  =  \epsilon(\alpha - \epsilon)  w  + \beta (\sin(\tilde{u}) - \sin(u)).
  \label{eq:yshft:eqn}
\end{align}
This equation can be projected to low and high frequencies, giving
\begin{align}
\partial_t P_N\yshft + (\alpha - \epsilon) P_N\yshft - \Delta P_N w   &=  \epsilon(\alpha - \epsilon) P_N w +
   \indFn{\tau_{K} \leq t}\beta P_N (\sin(\tilde{u})-\sin(u)).
  \label{eq:yshft:pn}\\
\partial_t Q_N\yshft + (\alpha - \epsilon) Q_N\yshft - \Delta Q_N w   &=  \epsilon(\alpha - \epsilon) Q_N w  + \beta Q_N(\sin(\tilde{u}) - \sin(u)).
  \label{eq:yshft:qn}
\end{align}
Multiplying these expressions by $\yshft$, and integrating over $\DD$, when $t<\tau_K$ this gives
\begin{align}
  \frac{d}{dt} (|P_N\yshft|^2 + \|P_N w\|^2) +  \epsilon (|P_N\yshft|^2 + \|P_N w\|^2)
  &\leq   0,
  \label{eq:FP:wave:bnd:p}
  \\
    \frac{d}{dt} (|Q_N\yshft|^2 + \|Q_N w\|^2) +  \epsilon (|Q_N\yshft|^2 + \|Q_N w\|^2)
  &\leq
   \beta \langle Q_N(\sin(\tilde{u}) - \sin(u)),Q_N \yshft\rangle.
  \label{eq:FP:wave:bnd:q}
\end{align}
By Gr\"{o}nwall's inequality
\begin{align}
(|P_N\yshft|^2 + \|P_N w\|^2)(t\wedge \tau_K) \leq e^{-\epsilon t\wedge \tau_K}(|P_N\yshft_0|^2 + \|P_N w_0\|^2),
\label{eq:l:m:sg:bnd}
\end{align}
and using the inverse Poincar\'{e} inequality, taking $N = N(\beta,\epsilon)$ sufficiently large, we find
\begin{align*}
  \frac{d}{dt} (|Q_N\yshft|^2 + \|Q_N w\|^2) +  \epsilon (|Q_N\yshft|^2 + \|Q_N w\|^2)
  &\leq
   |\beta| |w||Q_N \yshft| \leq |\beta| |P_N w||Q_N \yshft| + |\beta| |Q_N w||Q_N \yshft| \\
  &\leq \frac{\epsilon}{4}|Q_N\yshft|^2  + C_{\epsilon}|\beta|^{2} |P_N w|^{2} + \frac{|\beta|}{\lambda_N}\|Q_N w\||Q_N \yshft| \\
  &\leq \frac{\epsilon}{2}(|Q_N\yshft|^2 + \|Q_N w\|^2) + C_{\epsilon}|\beta|^{2} |P_N w|^{2}.
\end{align*}
Applying Gr\"{o}nwall once more and then making use of  \eqref{eq:l:m:sg:bnd} we find that for for $t<\tau_K$,
\begin{align}
(|Q_N\yshft|^2 + \|Q_N w\|^2)(t) &\leq e^{-\frac{\epsilon}{2}t} (|Q_N\yshft_0|^2 + \|Q_N w_0\|^2)
+ C_{\epsilon}|\beta|^{2}\int_{0}^{t}e^{-\frac{\epsilon}{2}(t-s)}|P_N w(s)|^{2}ds \\
&\leq  e^{-\frac{\epsilon}{2}t} (|Q_N\yshft_0|^2 + \|Q_N w_0\|^2)
+ \tilde{C}_{\epsilon}|\beta|^{2}e^{-\frac{\epsilon}{2}t}(|P_N\yshft_0|^2 + \|P_N w_0\|^2).
\end{align}
Combining the estimates on the high and low modes,
\begin{multline}
(|\yshft|^2 + \|w\|^2)(t\wedge\tau_K)  \leq e^{-\frac{\epsilon}{2}t\wedge\tau_K}\left((e^{-\frac{\epsilon}{2} t\wedge\tau_K}+ \tilde{C}_{\epsilon}|\beta|^{2})(|P_N\yshft_0|^2 + \|P_N w_0\|^2) +  |Q_N\yshft_0|^2 + \|Q_N w_0\|^2\right),
\end{multline}
and we conclude that $\tau_K=\infty$ almost surely for $K$ sufficiently large.
Moreover, due to \eqref{eq:r:v:eqiv} the convergence $|\pd{t}w|^2 + \|w\|{2} \leq 2(|\yshft|^2 + \|w\|^2) \rightarrow 0$ is obtained, almost surely.

In summary we have proven the following result
\begin{proposition}
For every $\alpha > 0$, $\beta \in \RR$ and $N \geq 0$ \eqref{eq:sign:gordon:rig} possesses an ergodic invariant measure $\mu$.
Moreover for each $\alpha > 0$ and $\beta \in \RR$ there exists an $N = N(\alpha, |\beta|)$ such that
if $\mathrm{Range}(\sigma) \supset P_N L^2(\DD)$, then $\mu$ is unique.
\end{proposition}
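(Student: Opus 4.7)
The plan is to handle existence and uniqueness separately, largely following the template of the preceding examples but adapted to the weakly damped hyperbolic setting where, unlike the parabolic cases, the dissipation coefficient $\alpha$ is uniform across spatial scales.

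For existence I would apply the Krylov-Bogolyubov theorem on the Feller state space $X = H^1_0(\DD) \times L^2(\DD)$. Starting from $U_0 = 0$ (which already lies in $Y = (H^2(\DD) \cap H^1_0(\DD)) \times H^1_0(\DD)$) and combining the energy bound \eqref{eq:diff:eq:wave:low} with the higher-regularity analogue \eqref{eq:diff:eq:wave:high}, integration in time and expectation yields $\frac{1}{T}\int_0^T \E(\|v(t)\|^2 + \|u(t)\|_{H^2}^2)dt \leq C$ uniformly in $T$. Since $Y \hookrightarrow X$ is compact by Rellich, the time-averaged measures are tight on $X$, and an ergodic invariant measure follows by weak limit and ergodic decomposition. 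Notice that no non-degeneracy of $\sigma$ enters, consistent with the statement allowing any $N \geq 0$.

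For uniqueness I would invoke Corollary~\ref{cor:precise:app} with $\tilde\rho$ the $X$-distance. Given $U_0, \tilde U_0 \in X$, pair the solution $U = (u,v)$ of \eqref{eq:sign:gordon:rig} with the Girsanov-shifted process $\tilde U$ solving \eqref{eq:sign:gordon:rig:shft}, where the low-mode control $\beta\indFn{\tau_K > t} P_N(\sin(u)-\sin(\tilde u))$ is designed precisely to cancel the projected nonlinearity. Boundedness of $\sin$ makes the Novikov condition \eqref{eq:st:nov:cond} automatic on $[0,\tau_K]$, so Theorem~\ref{thm:GirsonovRedux} gives equivalence of the pathspace laws. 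The heart of the argument is then to show that for suitable $N$ and $K$ (depending on $U_0,\tilde U_0$), the difference $U(t)-\tilde U(t)$ goes to zero in $X$ on a set of positive probability. Setting $w = u - \tilde u$ and $\yshft = \partial_t w + \epsilon w$ with $\epsilon$ as in \eqref{eq:eps:decision}, I would split \eqref{eq:yshft:eqn} into $P_N$ and $Q_N$ components. On $[0,\tau_K]$ the low-mode equation is homogeneous and yields $|P_N \yshft|^2 + \|P_N w\|^2 \leq e^{-\epsilon t}(|P_N \yshft_0|^2 + \|P_N w_0\|^2)$ by Gr\"onwall. On the high modes, the residual $\beta Q_N(\sin(\tilde u) - \sin(u))$ is estimated by $|\sin(\tilde u) - \sin(u)| \leq |w|$, and the inverse Poincar\'e inequality $|Q_N w| \leq \lambda_N^{-1/2}\|Q_N w\|$ converts the Lipschitz constant $|\beta|$ into a small factor $|\beta|/\sqrt{\lambda_N}$ absorbable into the dissipation once $N = N(\alpha,|\beta|)$ is chosen large; the low-mode contribution appears as an exponentially decaying forcing and closes via a second Gr\"onwall. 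Passing back through the equivalence \eqref{eq:r:v:eqiv}, this gives deterministic exponential decay of $|\partial_t w|^2 + \|w\|^2$ on $[0,\tau_K]$, so choosing $K$ sufficiently large forces $\tau_K = \infty$ and delivers the required asymptotic coupling.

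The main obstacle, shared with the Euler-Voigt example and flagged in the introduction, is precisely that the damping rate does not improve with frequency, so there is no built-in high-mode contraction to beat the nonlinearity. The saving feature here is that $\sin$ is globally Lipschitz with constant $|\beta|$, independent of any Sobolev norm of $u$; a single power of the inverse Poincar\'e gain therefore suffices to absorb the high-mode residual, and the critical balance to be verified is simply that $|\beta|^2/\lambda_N$ can be made small compared to $\epsilon \sim \min\{\alpha, \lambda(\DD)/\alpha\}$.
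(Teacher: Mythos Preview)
Your proposal is correct and follows essentially the same approach as the paper: existence via Krylov--Bogolyubov using the $Y$-bound \eqref{eq:diff:eq:wave:high} combined with \eqref{eq:diff:eq:wave:low} starting from zero data, and uniqueness via the asymptotic coupling \eqref{eq:sign:gordon:rig:shft} analyzed through the shifted variable $\yshft = \partial_t w + \epsilon w$, split into $P_N$ and $Q_N$ components with the high-mode nonlinear residual absorbed by inverse Poincar\'e. The paper likewise obtains deterministic exponential decay on $[0,\tau_K]$ and concludes $\tau_K = \infty$ almost surely for $K$ large, exactly as you outline.
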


\section*{Acknowledgments}
This work was partially supported by the National Science Foundation under the grant (NEGH)
NSF-DMS-1313272.   JCM was partially supported
by the Simmons Foundation. We would like to thank Peter Constantin, Michele Coti-Zelati, Juraj Foldes and Vlad Vicol
for helpful feedback.  We would also like to express our appreciation to the Mathematical Sciences Research Institute
(MSRI) as well as the Duke and Virginia Tech Math Departments where
the majority of this work was carried out.

\appendix
\section{Existence of Invariant Measures by a Limiting Procedure}
\label{sec:ex:IM}
We now present some abstract results which are used above to infer the existence of an invariant
measure via an approximation procedure relying on invariant measures for a collection
of regularized systems.

Let $(\spH, \| \cdot \|_\spH)$, $(\spV, \| \cdot \|_\spV)$ be two separable Banach spaces.
The associated Borel $\sigma$-algebras are denoted as $\mathcal{B}(H)$ and $\mathcal{B}(V)$ respectively.
We suppose that $V$ is continuously and compactly embedded in $\spH$.
Moreover we assume  that there exists continuous functions $\rho_n:
H \to V$ for $n \geq 1$ such that
\begin{align*}
  \lim_{n \to \infty} \|\rho_n(u)\|_V =
  \begin{cases}
     \| u\|_V & \textrm{ for } u \in V\\
     \infty & \textrm{ for } u \in H \setminus V.
  \end{cases}
\end{align*}
Notice that, under these circumstances,
$\mathcal{B}(\spV) \subset \mathcal{B}(\spH)$ and moreover that
$A \cap \spV \in \mathcal{B}(\spV)$ for any $A \in \mathcal{B}(\spH)$.
We can therefore extend any Borel measure $\mu$ on $V$ to a measure
$\mu_E$ on $H$ by setting $\mu_E(A) = \mu(A \cap V)$ and hence we
identify $Pr(V) \subset Pr(H)$.  This natural extension will be made
without further comment in what follows.

By appropriately restricting the domain of elements $\phi \in C_b(H)$
to $V$ we have that $C_b(H) \subset C_b(V)$.  Similarly
$\mathrm{Lip}(H) \subset \mathrm{Lip}(V)$, etc.  Furthermore, under
the given conditions on $H$ and $V$, $C_b(H) \cap \mathrm{Lip}(H)$
determines measures in $Pr(V)$ namely if
$\int_V \phi \, d\mu = \int_V \phi \, d \nu$ for all
$\phi \in C_b(H) \cap \mathrm{Lip}(H)$ then $\mu = \nu$.

On $\spV$ we consider a Markov transition
kernel $P$, which is assumed to be Feller in $H$, that is to say
$P$ maps $C_b(\spH)$ to itself.
We also suppose that $\{P^\epsilon\}_{\epsilon > 0}$ is a sequence of
Markov transition kernels (again defined on $V$) such that, for any $\phi \in C_b(H) \cap \mathrm{Lip}(H)$,
and $R>0$,
\begin{align}
  \lim_{\epsilon \rightarrow 0} \sup_{u \in B_{R}(\spV)} | P^\epsilon\phi(u) - P\phi(u)| = 0,
  \label{eq:weak:u:conv}
\end{align}
where $B_R(V)$ is the ball of radius $R$ in $V$.

  \begin{lemma}\label{randomDataOK}
  In the above setting, let $\{\mu^\epsilon\}_{\epsilon > 0}$ be a sequence of probability  measures on $\spV$.
  Assume that there is an increasing continuous function
$\psi:[0,\infty) \rightarrow [0,\infty)$ with $\psi(r) \rightarrow
\infty$ as $r\rightarrow \infty$ and a finite constant $C_0 >0$ so that
\begin{align}
  \sup_{\epsilon > 0} \int \psi(\|u\|_{\spV}) d \mu^\epsilon \leq C_0.
  \label{eq:uni:mom:bnd:cond}
\end{align}
Then there exists a probability measure $\mu$, supported on $V$, with
$\int \psi(\|u\|_{\spV}) d \mu(u) \leq C_0$ such that (up to a subsequence)
$\mu^\epsilon P^\epsilon$ converges weakly  in $\spH$
    to  $\mu P$ that is, for all $\phi \in C_b(\spH)$,
    \begin{align}
     \lim_{\epsilon\rightarrow0}  \mu^\epsilon P^\epsilon\phi = \mu P\phi
     \label{eq:weak:muP:conv}
    \end{align}
  \end{lemma}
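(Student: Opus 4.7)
The plan is to combine a tightness argument (from the uniform moment bound and the compact embedding $V \hookrightarrow H$) with a direct splitting of the expression $\mu^\epsilon P^\epsilon \phi - \mu P \phi$ into a ``data'' part (measure convergence) and an ``operator'' part (the approximation hypothesis).

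First, I would establish tightness of $\{\mu^\epsilon\}$ in $H$. Define $N \colon H \to [0,\infty]$ by $N(u) := \liminf_{n\to\infty}\|\rho_n(u)\|_V$; by the assumed properties of $\rho_n$ this is a lower semicontinuous extension of $\|\cdot\|_V$ to $H$, with $N(u) = \|u\|_V$ for $u \in V$ and $N(u) = \infty$ otherwise. The sub-level sets $B_R := \{u \in H : N(u) \le R\} = \{u \in V : \|u\|_V \le R\}$ are thus closed in $H$, and relatively compact by the compact embedding. Chebyshev together with \eqref{eq:uni:mom:bnd:cond} yields $\mu^\epsilon(H\setminus B_R) \le C_0/\psi(R)$ uniformly in $\epsilon$, so $\{\mu^\epsilon\}$ is tight in $H$ and Prokhorov's theorem supplies a subsequence (not relabeled) with $\mu^\epsilon \to \mu$ weakly in $H$ for some $\mu \in Pr(H)$.

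Next, I would verify that $\mu$ is supported on $V$ with the asserted moment bound. Since $B_R$ is closed in $H$, the portmanteau theorem gives $\mu(B_R) \ge \limsup_\epsilon \mu^\epsilon(B_R) \ge 1 - C_0/\psi(R)$; letting $R\to\infty$ yields $\mu(V) = 1$. Applying lower semicontinuity of $\psi \circ N$ (monotone $\psi$ composed with l.s.c.\ $N$) together with Fatou's lemma for weak convergence of measures gives
\begin{align*}
\int \psi(\|u\|_V)\,d\mu = \int \psi(N(u))\,d\mu \le \liminf_{\epsilon\to 0} \int \psi(N(u))\,d\mu^\epsilon = \liminf_{\epsilon\to 0} \int \psi(\|u\|_V)\,d\mu^\epsilon \le C_0,
\end{align*}
where the penultimate equality uses that each $\mu^\epsilon$ is supported on $V$.

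To prove \eqref{eq:weak:muP:conv}, fix $\phi \in C_b(H) \cap \mathrm{Lip}(H)$ and decompose
\begin{align*}
\mu^\epsilon P^\epsilon \phi - \mu P \phi = \int (P^\epsilon \phi - P\phi)\,d\mu^\epsilon + \Bigl(\int P\phi\,d\mu^\epsilon - \int P\phi\,d\mu\Bigr).
\end{align*}
The second bracket tends to zero because $P$ is $H$-Feller (so $P\phi \in C_b(H)$) and $\mu^\epsilon \to \mu$ weakly in $H$. For the first term, split the domain of integration over $B_R$ and its complement:
\begin{align*}
\Bigl|\int (P^\epsilon\phi - P\phi)\,d\mu^\epsilon\Bigr| \le \sup_{u \in B_R(V)} |P^\epsilon \phi(u) - P\phi(u)| + 2\|\phi\|_\infty \,\frac{C_0}{\psi(R)}.
\end{align*}
By \eqref{eq:weak:u:conv} the first summand tends to $0$ as $\epsilon \to 0$ for each fixed $R$, and then the second is made arbitrarily small by taking $R$ large. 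Hence $\mu^\epsilon P^\epsilon \phi \to \mu P \phi$ for every $\phi \in C_b(H) \cap \mathrm{Lip}(H)$. Since bounded Lipschitz functions on the metric space $H$ form a convergence-determining class, this upgrades to weak convergence against all of $C_b(H)$, completing the proof.

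The main obstacle is the interplay in the middle step: we must transfer the $V$-support and the $\psi$-moment bound from the sequence $\mu^\epsilon$ to the weak limit $\mu$ even though weak convergence was established only in the weaker space $H$. This is precisely where the continuous approximants $\rho_n$ enter, providing the lower semicontinuous extension $N$ of $\|\cdot\|_V$ to $H$ that makes portmanteau and Fatou applicable; without such an object the limit $\mu$ could a priori escape $V$ or fail the uniform moment bound.
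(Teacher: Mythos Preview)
Your proof is correct and follows essentially the same route as the paper's: tightness from the uniform moment bound and the compact embedding, extraction of a weak limit $\mu$ in $H$, transfer of the $V$-moment bound to $\mu$ via the approximants $\rho_n$ and a Fatou-type argument, and the same two-term splitting of $\mu^\epsilon P^\epsilon\phi-\mu P\phi$ controlled by the $H$-Feller property together with the tail bound and hypothesis~\eqref{eq:weak:u:conv}. The only cosmetic differences are that the paper packages the Fatou step through the truncations $f_{k,m}(u)=\psi(\|\rho_m(u)\|_V)\wedge k\in C_b(H)$ rather than through your lower semicontinuous extension $N$, and that you make explicit the passage from $\phi\in C_b(H)\cap\mathrm{Lip}(H)$ (where \eqref{eq:weak:u:conv} is stated) to all of $C_b(H)$ via the convergence-determining property of bounded Lipschitz functions, a point the paper leaves implicit.
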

  \begin{proof}[Proof of Lemma~\ref{randomDataOK}]
    From our assumption we know that
    \begin{align}
    \mu^\epsilon (\psi(\|u\|_{\spV}) \geq R) \leq C_0/\psi(R)
    \label{eq:cheb:u:bnd}
    \end{align}
    for all $\epsilon> 0$. We infer that
  the family of measures $\{\mu^\epsilon\}_{\epsilon >0}$ is tight
  on $\spH$ and thus that there
  exists a measure $\mu$ on $\spH$ such that $\mu^{\epsilon_n}$ converges
  weakly in H to $\mu$ for some decreasing subsequence
  $\epsilon_n \to 0$.   For $k, m \geq 1$ define $f_{k,m} \in C_b(H)$
  as $f_{k,m}(u) := \psi(\|\rho_m (u)\|_{\spV}) \wedge k$.   Weak convergence in $H$ implies
  that $\int f_{k,m} d \mu^{\epsilon_n}
  \rightarrow \int f_{k,m} d \mu \leq C_0$ as $n \to \infty$ for each fixed $k, m$.  Fatou's
  lemma then implies that
  \begin{align*}
  \int \psi(\|u\|_{\spV})  d\mu(u) \leq  \lim_{k, m \rightarrow \infty}  \int f_{k,m}(u)  d\mu(u) \leq C_0
  \end{align*}
  and in particular that $\mu(\spV)=1$.

   We now turn to demonstrate \eqref{eq:weak:muP:conv}.   Observe
   that, for any $\phi \in C_b(H)$ and any $\epsilon>0$,
\begin{align}
 \big|  \mu^{\epsilon}   P^{\epsilon}\phi - \mu  P\phi \big| & \leq
      \left| \mu^{\epsilon} P^{\epsilon}\phi   -  \mu^{\epsilon}  P\phi  \right| + \left| \mu^{\epsilon}  P\phi \  -  \mu P\phi \right|
      \label{eq:weak:p:bnd:1}
\end{align}
Taking $\epsilon=\epsilon_n$, the first term is bounded as
    \begin{align}
        \left| \mu^{\epsilon_n}  P^{\epsilon_n}\phi  -   \mu^{\epsilon_n} P\phi  \right|
        & \leq\sup_{u \in B_{S}(\spV)}|  P^{\epsilon_n} \phi(u)- P\phi(u) | +
                                                     2 \sup_{u} | \phi(u)| \, \mu^{\epsilon_n} ( B_{S}(\spV)^c)
                                                           \label{eq:weak:p:bnd:2}
    \end{align}
    for any $S > 0$.
    Combining \eqref{eq:weak:p:bnd:1}, \eqref{eq:weak:p:bnd:2} with \eqref{eq:weak:u:conv}, \eqref{eq:cheb:u:bnd}, using that $\mu^{\epsilon_n}$ converges
weakly in $\spH$ and that $ P^\epsilon\phi \in C_b(\spH)$ we infer \eqref{eq:weak:muP:conv}, completing the proof.
  \end{proof}

This produces the following corollary.
\begin{corollary}\label{cor:ex:c}
  In the above setting, if in addition we assume that, for every $\epsilon >0$,
  $\mu^\epsilon$ is
  an invariant measure for $P^\epsilon$ then the limiting measure $\mu$ is an invariant
  measure of $P$.
\end{corollary}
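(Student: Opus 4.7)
The plan is to deduce Corollary~\ref{cor:ex:c} directly from Lemma~\ref{randomDataOK} by passing to the limit in the invariance identity $\mu^\epsilon P^\epsilon = \mu^\epsilon$. Concretely, since the hypotheses of Lemma~\ref{randomDataOK} are assumed to hold in the ``above setting'', we may extract a subsequence $\epsilon_n \downarrow 0$ along which $\mu^{\epsilon_n} \to \mu$ weakly in $H$ and, simultaneously, $\mu^{\epsilon_n} P^{\epsilon_n} \to \mu P$ weakly in $H$ in the sense of \eqref{eq:weak:muP:conv}.

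The key steps are then as follows. First, I would fix an arbitrary $\phi \in C_b(H)$ and use invariance of $\mu^{\epsilon_n}$ for $P^{\epsilon_n}$ to rewrite
\begin{align*}
   \mu P \phi  = \lim_{n \to \infty} \mu^{\epsilon_n} P^{\epsilon_n} \phi = \lim_{n \to \infty} \mu^{\epsilon_n} \phi = \mu \phi,
\end{align*}
where the first equality uses \eqref{eq:weak:muP:conv}, the second uses $\mu^{\epsilon_n} P^{\epsilon_n} = \mu^{\epsilon_n}$, and the third uses the weak convergence $\mu^{\epsilon_n} \to \mu$ in $H$ together with $\phi \in C_b(H)$. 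Second, since $\phi \in C_b(H)$ was arbitrary and $C_b(H)$ determines measures on $H$ (and in particular on $V \subset H$, where $\mu$ is supported by Lemma~\ref{randomDataOK}), this identity shows $\mu P = \mu$ as measures, which is exactly the invariance of $\mu$ under $P$.

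The only subtle point to verify is that the hypotheses of Lemma~\ref{randomDataOK} are genuinely available here, i.e., that the uniform moment bound \eqref{eq:uni:mom:bnd:cond} is part of the ``above setting'' assumed in Corollary~\ref{cor:ex:c}; otherwise the argument is essentially a one-line diagram chase once the Lemma is in hand. There is no real technical obstacle beyond invoking the Lemma: the Feller property of $P$ on $H$ and the convergence \eqref{eq:weak:u:conv} of $P^\epsilon$ to $P$ on $V$-bounded sets have already been used inside Lemma~\ref{randomDataOK} to justify passing $\mu^{\epsilon_n} P^{\epsilon_n} \phi$ to $\mu P \phi$, so no additional approximation or tightness argument is required at this stage.
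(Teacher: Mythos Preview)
Your proposal is correct and follows essentially the same approach as the paper: invoke Lemma~\ref{randomDataOK} to extract a subsequence along which both $\mu^{\epsilon_n} \to \mu$ and $\mu^{\epsilon_n} P^{\epsilon_n} \to \mu P$ weakly in $H$, then use the invariance $\mu^{\epsilon_n} P^{\epsilon_n} = \mu^{\epsilon_n}$ to conclude $\mu P = \mu$. The only difference is that you spell out the argument against a fixed test function $\phi \in C_b(H)$, whereas the paper phrases it directly at the level of measures; the content is identical.
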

\begin{proof}
By the above result we may pick $\epsilon_n \to 0$ such that $\mu^{\epsilon_n}$ and  $\mu^{\epsilon_n}
  P^{\epsilon_n}$ converge weakly in $\spH$ to $\mu$ and $\mu P$
  respectively. However since $\mu^{\epsilon_n}
  P^{\epsilon_n}=\mu^{\epsilon_n}$ we also have that $\mu^{\epsilon_n}
  P^{\epsilon_n}$ converges weakly  in $\spH$ to $\mu$. Hence we
  conclude that $\mu P=\mu$ which is means the $\mu$ is an invariant
  measure for $P$.
\end{proof}

\addcontentsline{toc}{section}{References}
\begin{footnotesize}
\def\cprime{$'$}

\end{footnotesize}

\end{document}